\newtheorem{theorem}{Theorem}
\theoremstyle{plain}
\newtheorem{corollary}{Corollary}
\newtheorem{definition}{Definition}
\newtheorem{example}{Example}
\newtheorem{lemma}{Lemma}
\newtheorem{remark}{Remark}
\numberwithin{equation}{section}
\begin{document}
\title[]{Characterizations of right modular groupoids by $\left( \in ,\in
\vee q_{k}\right) $-fuzzy ideals}
\subjclass[2000]{20M10, 20N99}
\author{}
\maketitle

\begin{center}
\textbf{Madad Khan and Shamas-ur-Rehman}

\ Department of Mathematics

COMSATS Institute of Information Technology

\ Abbottabad, Pakistan.

\textit{E-mail:} madadmath@yahoo.com

\textit{E-mail: }shamas\_200814@yahoo.com

\bigskip
\end{center}

\textbf{Abstract. }In this paper, we have introduced the concept of $\left(
\in ,\in \vee q\right) $-fuzzy ideals in a right modular groupoid. We have
discussed several important features of a completely regular right modular
groupoid by using the $\left( \in ,\in \vee q\right) $-fuzzy left (right,
two-sided) ideals, $\left( \in ,\in \vee q\right) $-fuzzy (generalized)
bi-ideals and $\left( \in ,\in \vee q\right) $-fuzzy $(1,2)$-ideals. We have
also used the concept of $\left( \in ,\in \vee q_{k}\right) $-fuzzy left
(right, two-sided) ideals, $\left( \in ,\in \vee q_{k}\right) $-fuzzy
quasi-ideals $\left( \in ,\in \vee q_{k}\right) $-fuzzy bi-ideals and $%
\left( \in ,\in \vee q_{k}\right) $-fuzzy interior ideals in completely
regular right modular groupoid and proved that the $\left( \in ,\in \vee
q_{k}\right) $-fuzzy left (right, two-sided), $\left( \in ,\in \vee
q_{k}\right) $-fuzzy (generalized) bi-ideals, and $\left( \in ,\in \vee
q_{k}\right) $-fuzzy interior ideals coincide in a completely regular right
modular groupoid.

\textbf{Keywords. }Right modular groupoid, completely regular, $\left( \in
,\in \vee q\right) $-fuzzy ideals and $\left( \in ,\in \vee q_{k}\right) $%
-fuzzy ideals

\begin{center}
\bigskip

{\LARGE Introduction}
\end{center}

The fundamental concept of fuzzy sets was first introduced by Zadeh $[18]$
in $1965$. Given a set $X$, a fuzzy subset of $X$ is, by definition an
arbitrary mapping $f:X\rightarrow \lbrack 0,1]$ where $[0,1]$ is the unit
interval. Rosenfeld introduced the definition of a fuzzy subgroup of a group 
$[15]$. Kuroki initiated the theory of fuzzy bi ideals in semigroups $[8]$.
The thought of belongingness of a fuzzy point to a fuzzy subset under a
natural equivalence on a fuzzy subset was defined by Murali $[11]$. The
concept of quasi-coincidence of a fuzzy point to a fuzzy set was introduce
in $[14]$. Jun and Song \ introduced $\left( \alpha ,\beta \right) $-fuzzy
interior ideals in semigroups $[4]$.

In this paper we have characterized non-associative algebraic structures
called right modular groupoids by their $\left( \in ,\in \vee q_{k}\right) $%
-fuzzy ideals. A right modular groupoid $M$ is non-associative and
non-commutative algebraic structure mid way between a groupoid and a
commutative semigroup.

The concept of a left almost semigroup (LA-semigroup) $[5]$ or a right
modular groupoid was first given by M. A. Kazim and M. Naseeruddin in $1972$%
. A right modular groupoid $M$ is a groupoid having the left invertive law,%
\begin{equation}
(ab)c=(cb)a\text{, for\ all }a\text{, }b\text{, }c\in M\text{.}  \tag{$1$}
\end{equation}%
In a right modular groupoid $M$, the following medial law $[5]$ holds,%
\begin{equation}
(ab)(cd)=(ac)(bd)\text{, for\ all }a\text{, }b\text{, }c\text{, }d\in M\text{%
.}  \tag{$2$}
\end{equation}%
The left identity in a right modular groupoid if exists is unique $[12]$. In
a right modular groupoid $M$ with left identity the following paramedial law
holds [13],%
\begin{equation}
(ab)(cd)=(dc)(ba),\text{ for\ all }a,b,c,d\in M.  \tag{$3$}
\end{equation}%
If a right modular groupoid $M$ contains a left identity, then,

\begin{equation}
a(bc)=b(ac)\text{, for\ all }a\text{, }b\text{, }c\in M\text{.}  \tag{$4$}
\end{equation}

\bigskip

{\huge Preliminaries}

\bigskip

Let $M$ be a right modular groupoid, by a subgroupoid of $M,$ we means a
non-empty subset $A$ of $M$ such that $A^{2}\subseteq A$. A non-empty subset 
$A$ of a right modular groupoid $M$ is called left (right) ideal of $M$ if $%
MA\subseteq A$ $(AM\subseteq A)$. $A$ is called two-sided ideal or simply
ideal if it is both a left and a right ideal of $M$. A non empty subset $A$
of a right modular groupoid $M$ is called generalized bi-ideal of $M$ if $%
(AM)A\subseteq A$. A subgroupoid $A$ of $M$ is called bi-ideal of $M$ if $%
(AM)A\subseteq A$. A subgroupoid $A$ of $M$ is called interior ideal of $M$
if $(MA)M\subseteq A$. A non-empty subset $A$ of a right modular groupoid $M$
is called quasi-ideal of $M$ if $QM\cap MQ\subseteq Q$. Every one sided
ideal is quasi ideal, every quasi ideal is, every bi-ideal is generalized
bi-ideal but converse is not true in general. Also every two sided ideal is
interior ideal but converse is not true.

\begin{definition}
A fuzzy subset $F$ of a right modular groupoid $M$ is called a fuzzy
interior ideal of $M$ if it satisfy the following conditions,

$(i)$ $F(xy)\geq \min \{F(x),F(y)\}$ for all $x,y\in M$.

$(ii)$ $F((xa)y)\geq F(a)$ for all $x,a,y\in M$.
\end{definition}

\begin{definition}
For a fuzzy set $F$ of a right modular groupoid $M$ and $t\in (0,1]$, the
crisp set $U(F;t)=\{x\in M$ such that $F(x)\geq t\}$ is called level subset
of $F$.
\end{definition}

\begin{definition}
A fuzzy subset $F$ of a right modular groupoid $M$ of the form
\end{definition}

\begin{equation*}
F(y)=\left \{ 
\begin{array}{c}
t\in (0,1]\text{ if }y=x \\ 
0\text{ \  \  \  \  \  \  \  \  \  \  \  \  \ if }y\neq x%
\end{array}%
\right.
\end{equation*}%
is said to be a fuzzy point with support $x$ and value $t$ and is denoted by 
$x_{t}$.

A fuzzy point $x_{t}$ is said to \textit{belong} to $($\textit{resp.
quasi-coincident with}$)$ a fuzzy set $F$, written as $x_{t}\in F$ $($%
\textit{resp. }$x_{t}qF)$ if $F(x)\geq t$ $($\textit{resp. }$F(x)+t>1)$. If $%
x_{t}\in F$ or $($\textit{resp. and}$)$ $x_{t}qF$, then we write $x_{t}\in
\vee q$ $(\in \wedge q)F$. The symbol $\overline{\in \vee q}$ means $\in
\vee q$ does not hold.

\begin{lemma}
(cf. [7]) A fuzzy set $F$ of a right modular groupoid $M$ is a fuzzy
interior ideal of $M$ if and only if $U(F;t)\left( \neq \emptyset \right) $
is an interior ideal of $M$.
\end{lemma}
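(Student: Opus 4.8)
The plan is to establish the biconditional by proving the two implications separately, matching the two defining conditions of a fuzzy interior ideal against the two requirements on a crisp interior ideal: condition $(i)$ of the definition corresponds to the closure property $A^{2}\subseteq A$ (subgroupoid), and condition $(ii)$ corresponds to $(MA)M\subseteq A$.

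For the forward direction, I would assume $F$ is a fuzzy interior ideal and fix $t\in (0,1]$ with $U(F;t)\neq \emptyset$. To check closure, take $x,y\in U(F;t)$, so that $F(x)\geq t$ and $F(y)\geq t$; then $(i)$ gives $F(xy)\geq \min \{F(x),F(y)\}\geq t$, hence $xy\in U(F;t)$ and $U(F;t)$ is a subgroupoid. For the interior property, take any $a\in U(F;t)$ and arbitrary $x,y\in M$; condition $(ii)$ yields $F((xa)y)\geq F(a)\geq t$, so $(xa)y\in U(F;t)$, that is, $(MU(F;t))M\subseteq U(F;t)$. Together these show $U(F;t)$ is an interior ideal.

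For the converse I would argue by contradiction, manufacturing a suitable threshold from the hypothesis. Suppose $(i)$ fails, so there exist $x,y\in M$ with $F(xy)<\min \{F(x),F(y)\}$; choose $t$ with $F(xy)<t\leq \min \{F(x),F(y)\}$. Then $x,y\in U(F;t)$, so $U(F;t)$ is a nonempty interior ideal and in particular a subgroupoid, forcing $xy\in U(F;t)$, i.e. $F(xy)\geq t$, contradicting $F(xy)<t$. The argument for $(ii)$ is identical in spirit: if $F((xa)y)<F(a)$ for some $x,a,y\in M$, pick $t$ with $F((xa)y)<t\leq F(a)$; then $a\in U(F;t)\neq \emptyset$, and the interior-ideal property $(MU(F;t))M\subseteq U(F;t)$ gives $(xa)y\in U(F;t)$, contradicting $F((xa)y)<t$. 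Hence both $(i)$ and $(ii)$ hold and $F$ is a fuzzy interior ideal.

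The computations here are routine, so the only step needing genuine care—the \textquotedblleft main obstacle\textquotedblright\ such as it is—is the choice of threshold $t$ in the converse. It must lie strictly above the offending value $F(xy)$ (resp. $F((xa)y)$) yet be no larger than the relevant lower bound, so that the level set is actually nonempty and the hypothesis applies. The strictness of the assumed failure together with the fact that the admissible values lie in the half-open interval $(0,1]$ guarantees such a $t$ exists, which is what makes the contrapositive go through cleanly.
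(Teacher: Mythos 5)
Your proof is correct. Note that the paper itself supplies no proof of this lemma --- it is quoted from reference [7] --- so there is nothing to match line by line; your argument is the standard level-set transfer one would expect the omitted proof to be. The forward direction correctly reads off the two defining inequalities at the threshold $t$ (closure giving $A^{2}\subseteq A$, the second condition giving $(MA)M\subseteq A$), and the converse by contraposition is sound: the offending value $F(xy)$ (resp.\ $F((xa)y)$) sits strictly below a quantity that is therefore positive, so a threshold $t\in (0,1]$ with the required two-sided bound exists and makes the relevant level set nonempty, which is exactly the care the statement's parenthetical $\left( \neq \emptyset \right)$ demands. For comparison, the closest argument actually written out in the paper is the proof of the analogous lemma for $(\in ,\in \vee q)$-fuzzy interior ideals (Lemma 2), whose converse is run directly rather than by contraposition and contains some slips (it introduces $F(y)\geq r$ with $r$ unquantified and then takes minima over $t$ and $r$ without justification); your contrapositive formulation avoids those issues and is the cleaner way to handle that direction.
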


\begin{definition}
\label{def of (in, in or q)F}A fuzzy set $F$ of a right modular groupoid $M$
is called an $(\in ,\in \vee q)$-fuzzy interior ideal of $M$ if for all $%
t,r\in (0,1]$ and $x,a,y\in M$.

$(A1)$ $x_{t}\in F$ and $y_{r}\in F$ implies that $(xy)_{\min \{t,r\}}\in
\vee qF.$

$(A2)$ $a_{t}\in F$ implies $((xa)y)_{t}\in \vee qF$
\end{definition}

\begin{definition}
A fuzzy set $F$ of a right modular groupoid $M$ is called an $(\in ,\in \vee
q)$-fuzzy bi-ideal of $M$ if for all $t,r\in (0,1]$ and $x,y,z\in M$.

$(B1)$ $x_{t}\in F$ and $y_{r}\in F$ implies that $(xy)_{\min \{t,r\}}\in
\vee qF.$

$(B2)$ $x_{t}\in F$ and $z_{r}\in F$ implies $((xy)z)_{\min \{t,r\}}\in \vee
qF$.
\end{definition}

\begin{lemma}
A fuzzy set $F$ of a right modular groupoid $M$ is an $(\in ,\in \vee q)$%
-fuzzy interior ideal of $M$ if and only if $U(F;t)\left( \neq \emptyset
\right) $ is an interior ideal of $M$, for all $t\in (0,0\cdot 5]$.
\end{lemma}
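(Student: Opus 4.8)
The plan is to prove the two implications of the equivalence separately, reading off the two interior-ideal axioms (subgroupoid closure $A^{2}\subseteq A$ and $(MA)M\subseteq A$) directly from the defining conditions $(A1)$ and $(A2)$. The threshold $0\cdot 5$ will enter precisely because $\in \vee q$ allows the quasi-coincidence alternative $F(\cdot)+t>1$, which must be converted into honest membership $F(\cdot)\geq t$ when $t\leq 0\cdot 5$.

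For the forward direction, I would fix $t\in (0,0\cdot 5]$ with $U(F;t)\neq \emptyset$ and check the two closure properties. If $x,y\in U(F;t)$ then $F(x)\geq t$ and $F(y)\geq t$, so $x_{t}\in F$ and $y_{t}\in F$; applying $(A1)$ with $r=t$ gives $(xy)_{t}\in \vee qF$, that is $F(xy)\geq t$ or $F(xy)+t>1$. In the first case $xy\in U(F;t)$ directly, and in the second case $F(xy)>1-t\geq 1-0\cdot 5=0\cdot 5\geq t$, so again $xy\in U(F;t)$; hence $U(F;t)$ is a subgroupoid. The same pattern applied to $(A2)$ handles the interior condition: for $a\in U(F;t)$ and arbitrary $x,y\in M$ the point $a_{t}$ belongs to $F$, so $((xa)y)_{t}\in \vee qF$, and the identical case split (using $t\leq 0\cdot 5$) forces $(xa)y\in U(F;t)$. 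Thus $(M\,U(F;t))M\subseteq U(F;t)$, so $U(F;t)$ is an interior ideal.

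For the converse, I would argue by contradiction to establish $(A1)$ and $(A2)$. Suppose $(A1)$ fails: there are $x,y$ and $t,r$ with $x_{t}\in F$, $y_{r}\in F$ but $(xy)_{s}\,\overline{\in \vee q}\,F$ where $s=\min \{t,r\}$, i.e. $F(xy)<s$ and $F(xy)+s\leq 1$; note $F(x)\geq t\geq s$ and $F(y)\geq r\geq s$. I would then split on the size of $s$. If $s\leq 0\cdot 5$, set $\lambda =s$: then $x,y\in U(F;\lambda )$ while $F(xy)<\lambda$ gives $xy\notin U(F;\lambda )$, contradicting that $U(F;\lambda )$ is a subgroupoid. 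If $s>0\cdot 5$, then $F(xy)+s\leq 1$ forces $F(xy)\leq 1-s<0\cdot 5$; setting $\lambda =0\cdot 5\in (0,0\cdot 5]$ we again get $x,y\in U(F;\lambda )$ (since $F(x),F(y)\geq s>0\cdot 5=\lambda $) but $xy\notin U(F;\lambda )$, the same contradiction. The verification of $(A2)$ is structurally identical: assuming $a_{t}\in F$ but $((xa)y)_{t}\,\overline{\in \vee q}\,F$, the same two-case choice of $\lambda$ yields $a\in U(F;\lambda )$ with $(xa)y\notin U(F;\lambda )$, contradicting $(M\,U(F;\lambda ))M\subseteq U(F;\lambda )$.

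The membership bookkeeping is routine; the one point that genuinely needs care is the converse when $s>0\cdot 5$ (resp. $t>0\cdot 5$). There the hypothesis only furnishes interior ideals at levels $\lambda \leq 0\cdot 5$, so the natural choice $\lambda =s$ is unavailable. The key observation is that failure of $\in \vee q$ supplies the quasi-coincidence bound $F(xy)+s\leq 1$, which pins $F(xy)$ strictly below $0\cdot 5$ and lets the single admissible level $\lambda =0\cdot 5$ do the work. Arranging $\lambda$ to keep $x,y$ (resp. $a$) inside the level set while pushing the product outside is the crux, and I expect this case to be the main obstacle to state cleanly.
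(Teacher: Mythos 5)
Your proof is correct and follows the same overall level-set scheme as the paper, but the execution differs in both directions and your converse is the more rigorous of the two. In the forward direction the paper does not case-split on $\in$ versus $q$: it invokes the equivalent threshold form of the definition, $F(xy)\geq \min \{F(x),F(y),0\cdot 5\}$ and $F((xa)y)\geq \min \{F(a),0\cdot 5\}$ (conditions $(A3)$, $(A4)$ of the theorem that follows the lemma), and reads off $F(xy)\geq t\wedge 0\cdot 5=t$ directly; your argument from $(A1)$, $(A2)$ with the conversion $F(xy)+t>1\Rightarrow F(xy)>1-t\geq t$ is equivalent and has the advantage of being self-contained. The more substantial difference is in the converse. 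The paper verifies $(A1)$, $(A2)$ by a direct check that conflates the levels $t$ and $r$, works only at a level where $U(F;\cdot )$ is already assumed to be an interior ideal, and never addresses what happens when the value of the fuzzy point exceeds $0\cdot 5$ --- which is exactly the case you single out as the crux, since the hypothesis supplies level sets only for $\lambda \in (0,0\cdot 5]$. Your contradiction argument, with the bound $F(xy)+s\leq 1\Rightarrow F(xy)\leq 1-s<0\cdot 5$ and the substitution $\lambda =0\cdot 5$, is precisely the step the paper omits, so your write-up actually closes a gap in the paper's own proof rather than merely reproducing it.
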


\begin{proof}
Let $F$ be an $(\in ,\in \vee q)$-fuzzy interior ideal of $M$. Let $x,y\in
U(F;t)$ and $t\in (0,0\cdot 5]$, then $F(x)\geq t$ and $F(y)\geq t$, so $%
F(x)\wedge F(y)\geq t$. As $F$ is an $(\in ,\in \vee q)$-fuzzy interior
ideal of $M$, so%
\begin{equation*}
F(xy)\geq F(x)\wedge F(y)\wedge 0\cdot 5\geq t\wedge 0\cdot 5=t\text{.}
\end{equation*}

Therefore, $xy\in U(F;t)$. Now if $x,y\in M$ and $a\in U(F;t)$ then $%
F(a)\geq t$ then $F((xa)y)\geq F(a)\geq t$. Therefore $((xa)y)\in U(F;t)$
and $U(F;t)$ is an interior ideal.

Conversely assume that $U(F;t)$ is a fuzzy interior ideal of $M$. If $x,y\in
U(F;t)$ then $F(x)\geq t$ and $F(y)\geq r$ which shows $x_{t}\in F$ and $%
y_{r}\in F$ as $U(F;t)$ is an interior ideal so $xy\in U(F;t)$ therefore $%
F(xy)\geq \min \{t,r\}$ implies that $(xy)_{\min \{t,r\}}\in F$, so $%
(xy)_{\min \{t,r\}}\in \vee qF$. Again let $x,y\in M$ and $a\in U(F;t)$ then 
$F(a)\geq t$ implies that $a_{t}\in F$ and $U(F;t)$ is an interior ideal so $%
((xa)y)\in U(F;t)$ then $F((xa)y)\geq t$ implies that $((xa)y)_{t}\in F$ so $%
((xa)y)_{t}\in \vee qF$. Therefore $F$ is an $(\in ,\in \vee q)$-fuzzy
interior ideal.
\end{proof}

\begin{theorem}
\label{th main}(cf. [7]) For a fuzzy set $F$ of a right modular groupoid $M$%
. The conditions $(A1)$ and $(A2)$ of Definition $4$, are equivalent to the
following,

$(A3)$ $(\forall x,y\in M)F(xy)\geq \min \{F(x),F(y),0\cdot 5\}$

$(A4)$ $F((xa)y)\geq \min \{F(a),0\cdot 5\}$.
\end{theorem}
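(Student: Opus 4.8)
The plan is to establish the two implications $(A1)\wedge(A2)\Rightarrow(A3)\wedge(A4)$ and $(A3)\wedge(A4)\Rightarrow(A1)\wedge(A2)$ separately. In each direction the interior-ideal condition $(A1)/(A3)$ and the translation condition $(A2)/(A4)$ are handled by completely parallel arguments, so I would treat one in detail and note that the other is identical after replacing the product $xy$ by $(xa)y$ and using one fuzzy point $a_t$ in place of the pair $x_t,y_r$. Throughout I would first unwind the definitions, recalling that $x_t\in F$ means $F(x)\geq t$, that $x_tqF$ means $F(x)+t>1$, and that $x_t\in\vee qF$ is the disjunction of the two.

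For the forward direction I would argue by contradiction. Assuming $(A3)$ fails, there exist $x,y\in M$ with $F(xy)<\min\{F(x),F(y),0\cdot 5\}$; I would set $t=\min\{F(x),F(y),0\cdot 5\}$, which lies in $(0,1]$ because it strictly exceeds the nonnegative number $F(xy)$. Then $F(x)\geq t$ and $F(y)\geq t$ give $x_t\in F$ and $y_t\in F$, so $(A1)$ forces $(xy)_t\in\vee qF$. But $F(xy)<t$ rules out $(xy)_t\in F$, and since $t\leq 0\cdot 5$ the bound $F(xy)+t<2t\leq 1$ rules out $(xy)_tqF$; this contradiction yields $(A3)$, and the same scheme applied to $(A2)$ yields $(A4)$.

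For the converse I would reason directly, splitting on the size of the threshold. Assuming $(A3)$, take $x_t\in F$ and $y_r\in F$, put $m=\min\{t,r\}\in(0,1]$, and note $F(xy)\geq\min\{F(x),F(y),0\cdot 5\}\geq\min\{m,0\cdot 5\}$. If $m\leq 0\cdot 5$ this reads $F(xy)\geq m$, so $(xy)_m\in F$; if $m>0\cdot 5$ it reads $F(xy)\geq 0\cdot 5$, whence $F(xy)+m>1$ and $(xy)_mqF$. In either case $(xy)_m\in\vee qF$, which is $(A1)$; the identical case split applied to $(A4)$ produces $(A2)$.

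The underlying computations are just elementary inequalities, so I anticipate no real obstacle; the only point needing care is the bookkeeping of the quasi-coincidence alternative. Specifically, in the contradiction step one must check that the hypothesis $t\leq 0\cdot 5$ is exactly what defeats both the $\in$ option and the $q$ option at once, and in the converse one must recognise that the cutoff $0\cdot 5$ is precisely the value at which the membership alternative hands off to the quasi-coincidence alternative.
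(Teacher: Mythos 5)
Your argument is correct, and it is essentially the standard proof: the paper itself omits a proof of this theorem (deferring to reference [7]), but the contradiction argument for the forward direction and the case split on the threshold against $0\cdot 5$ for the converse are exactly the scheme the paper carries out for the analogous $(\in,\in\vee q_{k})$ statement about fuzzy left ideals. Your only deviation is taking $t$ equal to $\min\{F(x),F(y),0\cdot 5\}$ rather than strictly between $F(xy)$ and that minimum, which works just as well since $F(xy)+t<2t\leq 1$.
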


\begin{theorem}
For a fuzzy set $F$ of a right modular groupoid $M$. The conditions $(B1)$
and $(B2)$ of Definition $5$, are equivalent to the following,

$(B3)$ $(\forall x,y\in M)F(xy)\geq \min \{F(x),F(y),0\cdot 5\}$

$(B4)$ $(\forall x,y,z\in M)F((xy)z)\geq \min \{F(x),F(y),0\cdot 5\}$.
\end{theorem}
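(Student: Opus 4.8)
The plan is to establish the two implications $(B1)\wedge(B2)\Rightarrow(B3)\wedge(B4)$ and $(B3)\wedge(B4)\Rightarrow(B1)\wedge(B2)$ separately, by the fuzzy-point/contrapositive method already used for the interior-ideal analogue in Theorem \ref{th main}. A useful first observation is that the subgroupoid halves coincide verbatim: $(B1)$ is word-for-word $(A1)$ and $(B3)$ is word-for-word $(A3)$, so the equivalence $(B1)\Leftrightarrow(B3)$ may be quoted directly from Theorem \ref{th main}, and only the bi-ideal halves $(B2)\Leftrightarrow(B4)$ need fresh argument.

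For the forward direction I would prove each bound by contradiction. For $(B3)$, suppose $F(xy)<\min\{F(x),F(y),0\cdot5\}$ for some $x,y$ and fix $t$ with $F(xy)<t\le\min\{F(x),F(y),0\cdot5\}$; then $x_{t}\in F$, $y_{t}\in F$ and $t\le0\cdot5$, so $(B1)$ forces $(xy)_{t}\in\vee qF$, i.e. $F(xy)\ge t$ or $F(xy)+t>1$. The former contradicts the choice of $t$ and the latter is impossible since $t\le0\cdot5$ gives $F(xy)+t<2t\le1$; this proves $(B3)$. For $(B4)$ I would run the same scheme on $(xy)z$: assume $F((xy)z)<\min\{F(x),F(y),0\cdot5\}$, choose $t$ strictly between the two sides, note which factors then lie in $F$ at level $t$, and apply the bi-ideal hypothesis $(B2)$ to reach $((xy)z)_{t}\in\vee qF$, which collides with $t\le0\cdot5$ exactly as above.

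For the reverse direction I would start from fuzzy points and split on the size of their common level. For $(B1)$: given $x_{t}\in F$ and $y_{r}\in F$, condition $(B3)$ yields $F(xy)\ge\min\{F(x),F(y),0\cdot5\}\ge\min\{t,r,0\cdot5\}$; if $\min\{t,r\}\le0\cdot5$ this already gives $(xy)_{\min\{t,r\}}\in F$, whereas if $\min\{t,r\}>0\cdot5$ then $F(xy)\ge0\cdot5$ makes $F(xy)+\min\{t,r\}>1$, i.e. $(xy)_{\min\{t,r\}}qF$; in both cases $(xy)_{\min\{t,r\}}\in\vee qF$. The passage from $(B4)$ to $(B2)$ repeats this two-case threshold analysis on $((xy)z)$.

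The step I expect to be the main obstacle is the bookkeeping tying $(B2)$ to $(B4)$. As written, $(B2)$ attaches its membership hypotheses to the outer factors $x$ and $z$ of the product $(xy)z$, so in each direction one must track precisely which factor carries the active level-$t$ point and reconcile it with the factors named in the bound $(B4)$. Once this index-matching is settled, the rest is the routine $\min\{\cdot,\cdot,0\cdot5\}$ threshold case analysis that governs the $\in\vee q$ dichotomy, which is mechanical.
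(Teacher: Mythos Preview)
Your plan is correct and is exactly what the paper does: its entire proof is the single line ``It is similar to the proof of Theorem~\ref{th main},'' so by spelling out the contrapositive/threshold argument you have simply unpacked what the paper leaves implicit. The one substantive point you flag---the mismatch between the factors named in $(B2)$ (namely $x$ and $z$) and those in $(B4)$ (namely $x$ and $y$)---is not a bookkeeping subtlety to be resolved but an outright typo in the statement of $(B4)$: it should read $F((xy)z)\ge\min\{F(x),F(z),0\cdot5\}$, as the later $(\in,\in\vee q_{k})$ analogue (where the corresponding condition is $F((xy)z)\ge\min\{F(x),F(z),\tfrac{1-k}{2}\}$) confirms; once you make that correction your argument goes through verbatim.
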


\begin{proof}
It is similar to proof of theorem \ref{th main}.
\end{proof}

\begin{definition}
A fuzzy subset $F$ of a right modular groupoid $M$ is called an $\left( \in
,\in \vee q\right) $-fuzzy $(1,2)$ ideal of $M$ if

$(i)$ $F(xy)\geq \min \{F(x),F(y),0.5\},$

$(ii)$ $F((xa)(yz))\geq \min \{F(x),F(y),F(z),0.5\},$ for all $x,a,y,z\in M.$
\end{definition}

\begin{theorem}
Every $\left( \in ,\in \vee q\right) $-fuzzy bi-ideal is an $\left( \in ,\in
\vee q\right) $-fuzzy $(1,2)$ ideal of a right modular groupoid $M$, with
left identity.
\end{theorem}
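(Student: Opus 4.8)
Let $F$ be an $(\in ,\in \vee q)$-fuzzy bi-ideal of $M$, so that by the $(B3)$--$(B4)$ characterization it satisfies $F(xy)\geq \min \{F(x),F(y),0\cdot 5\}$ and $F((xy)z)\geq \min \{F(x),F(y),0\cdot 5\}$ for all $x,y,z\in M$. I must verify the two conditions $(i)$ and $(ii)$ of the $(1,2)$-ideal definition. Condition $(i)$ is word-for-word $(B3)$, so it is inherited with nothing to prove; the entire argument lies in establishing $(ii)$.

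For $(ii)$ the plan is to rearrange the product $(xa)(yz)$ so that the two ``bi-ideal coordinates'' $x$ and $y$ sit together in the first factor, and then invoke $(B4)$. Applying the medial law $(2)$ to the elements $x,a,y,z$ gives $(xa)(yz)=(xy)(az)$. Reading the right-hand side as a product $(uv)w$ with $u=x$, $v=y$ and $w=az\in M$, condition $(B4)$ yields $F\big((xy)(az)\big)\geq \min \{F(x),F(y),0\cdot 5\}$.

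It then remains only to compare minima: enlarging the list inside a minimum can only decrease it, so $\min \{F(x),F(y),0\cdot 5\}\geq \min \{F(x),F(y),F(z),0\cdot 5\}$. Combining the medial identity with these two estimates gives $F\big((xa)(yz)\big)\geq \min \{F(x),F(y),F(z),0\cdot 5\}$, which is exactly $(ii)$, and the proof is complete.

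I expect the only delicate point to be the bracketing. Applying $(B4)$ naively to $(xa)(yz)$ with outer factors $xa$ and $yz$ would produce the useless bound $\min \{F(x),F(a),0\cdot 5\}$, since the arbitrary element $a$ would then occupy a bounded coordinate; the medial law is precisely what relocates $a$ into the harmless second factor and brings $y$ forward. The left-identity hypothesis is not strictly needed for this particular route, although one could instead use the paramedial law $(3)$ or the identity $(4)$ to carry out the rearrangement, and the remaining steps are routine monotonicity of $\min $.
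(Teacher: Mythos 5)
Your argument hinges on applying $(B4)$ to $(xy)(az)$ and extracting the bound $\min \{F(x),F(y),0.5\}$, i.e.\ on reading $(B4)$ as bounding $F((uv)w)$ below by the values of $F$ at the \emph{first two} factors $u,v$. That reading comes from a typographical slip in the paper's statement of the $(B3)$--$(B4)$ characterization: the condition it is asserted to be equivalent to, namely $(B2)$ of Definition 5, reads ``$x_{t}\in F$ and $z_{r}\in F$ implies $((xy)z)_{\min \{t,r\}}\in \vee qF$'', so the correct functional form is $F((uv)w)\geq \min \{F(u),F(w),0.5\}$ --- the \emph{outer} factors, exactly as in the crisp condition $(AM)A\subseteq A$ and as the paper itself writes in the later $(\in ,\in \vee q_{k})$ version. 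With the correct $(B4)$, your key step yields only $F((xy)(az))\geq \min \{F(x),F(az),0.5\}$, which is useless because the arbitrary element $a$ sits inside $az$. You correctly sensed that the danger is $a$ landing in a bounded coordinate, but the bounded coordinates are the first and the third, and your medial rearrangement $(xa)(yz)=(xy)(az)$ parks $a$ precisely in the third one. The property you actually invoke, $F((uv)w)\geq \min \{F(u),F(v),0.5\}$, is false for bi-ideals in general: already in the crisp case a bi-ideal $A$ need not satisfy $(AA)M\subseteq A$.

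The repair is essentially the paper's proof, and it genuinely uses the left identity (a further warning sign is your closing remark that the hypothesis seemed dispensable). By $(4)$, which requires the left identity, $(xa)(yz)=y((xa)z)$; then $(B3)$ gives $F(y((xa)z))\geq \min \{F(y),F((xa)z),0.5\}$, the left invertive law $(1)$ gives $(xa)z=(za)x$, and the correct $(B4)$ applies to $(za)x$ with outer factors $z$ and $x$ to give $F((za)x)\geq \min \{F(z),F(x),0.5\}$. Combining these yields the required bound $\min \{F(x),F(y),F(z),0.5\}$.
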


\begin{proof}
Let $F$ be an $\left( \in ,\in \vee q\right) $-fuzzy bi-ideal of $M$ and let 
$x,a,y,z\in M$ then by using $(4)$ and $(1)$, we have%
\begin{eqnarray*}
F((xa)(yz)) &=&F(y((xa)z))\geq \min \left \{ F(y),F((xa)z),0.5\right \} \\
&=&\min \left \{ F(y),F((za)x),0.5\right \} \geq \min \left \{
(Fy),F(z),F(x),0.5,0.5\right \} \\
&=&\min \left \{ (Fy),F(z),F(x),0.5\right \} .
\end{eqnarray*}

Therefore $F$ is an $\left( \in ,\in \vee q\right) $-fuzzy $(1,2)$ ideal of
a right modular groupoid $M$.
\end{proof}

\begin{theorem}
Every $\left( \in ,\in \vee q\right) $-fuzzy interior ideal is an $\left(
\in ,\in \vee q\right) $-fuzzy $(1,2)$ ideal of a right modular groupoid $M$%
, with left identity $e$.
\end{theorem}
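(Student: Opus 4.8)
The plan is to verify the two defining conditions of an $\left( \in ,\in \vee q\right)$-fuzzy $(1,2)$ ideal directly from the characterization of interior ideals supplied by Theorem \ref{th main}. Recall that an $\left( \in ,\in \vee q\right)$-fuzzy interior ideal $F$ satisfies $(A3)$: $F(xy)\geq \min \{F(x),F(y),0.5\}$ and $(A4)$: $F((xa)y)\geq \min \{F(a),0.5\}$ for all $x,a,y\in M$. Condition $(i)$ of the $(1,2)$ ideal is literally $(A3)$, so it holds at once, and the entire content of the theorem lies in establishing condition $(ii)$, namely $F((xa)(yz))\geq \min \{F(x),F(y),F(z),0.5\}$ for all $x,a,y,z\in M$.

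For $(ii)$ I would first observe that it suffices to produce the stronger single-variable bound $F((xa)(yz))\geq \min \{F(y),0.5\}$, since $\min \{F(y),0.5\}\geq \min \{F(x),F(y),F(z),0.5\}$ automatically. The strategy is then purely structural: rewrite the element $(xa)(yz)$, using only the groupoid identities, into a left-associated triple product whose middle factor is $y$, and then invoke $(A4)$. Concretely, the medial law $(2)$ gives $(xa)(yz)=(xy)(az)$, and then the left invertive law $(1)$, applied to the outer product with the right-hand factor $az$ regarded as a single element, yields $(xy)(az)=((az)y)x$. Since this last expression has the form $((az)\,y)\,x$, the interior-ideal condition $(A4)$ with middle element $y$ gives $F(((az)y)x)\geq \min \{F(y),0.5\}$, and the chain of inequalities closes.

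The \emph{main obstacle} is the second step: discovering the sequence of identities that moves one of $x,y,z$ into the protected middle slot. Unlike the bi-ideal case of the preceding theorem, where $(B4)$ bounds a triple product in terms of its two outer factors and one can imitate the earlier computation via $(4)$ and $(1)$, the interior-ideal property $(A4)$ shields only the single middle factor. Consequently a naive imitation, say repeatedly applying $(4)$ to peel off the leading factor of the right-hand product, keeps forcing the auxiliary element $a$ into the exposed position and fails to bound $F$ in terms of $x$, $y$, or $z$. The resolution is to pair the medial law with the left invertive law so that $y$, rather than $a$, becomes the inner second factor; once the expression reaches the form $((az)y)x$ a single use of $(A4)$ completes the proof. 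I would also note that this argument uses only $(1)$, $(2)$, and $(A4)$, so the left-identity hypothesis, while stated, is not actually needed here.
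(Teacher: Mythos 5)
Your proof is correct, but it is genuinely different from the one in the paper. The paper first splits $(xa)(yz)$ as a product of the two factors $xa$ and $yz$ and applies the subgroupoid condition $(A3)$ twice, obtaining $\min\{F(xa),F(y),F(z),0.5\}$; it then disposes of the leftover term $F(xa)$ by writing $xa=(ex)a=(ax)e$ via the left identity and the left invertive law, so that $(A4)$ yields $F(xa)\geq\min\{F(x),0.5\}$. Your argument instead rewrites the whole element once and for all, $(xa)(yz)=(xy)(az)=((az)y)x$ by the medial law $(2)$ and the left invertive law $(1)$, and then applies $(A4)$ a single time with $y$ in the protected middle slot, giving the stronger bound $F((xa)(yz))\geq\min\{F(y),0.5\}$, which trivially dominates $\min\{F(x),F(y),F(z),0.5\}$. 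All steps check out: $(xa)(yz)=(xy)(az)$ is a correct instance of $(2)$, $(xy)(az)=((az)y)x$ is a correct instance of $(1)$ with $az$ treated as a single element, and $(A4)$ applies to $((az)y)x$ with middle factor $y$. Since the medial law holds in every right modular groupoid without assuming a left identity, your observation that the left-identity hypothesis is superfluous is also correct; your version of the proof is both shorter (one use of $(A4)$, no use of $(A3)$ beyond condition $(i)$) and establishes the theorem in slightly greater generality than the paper's argument, at the mild cost of yielding a bound that depends only on $F(y)$ rather than symmetrically on $F(x),F(y),F(z)$.
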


\begin{proof}
Let $F$ be an $\left( \in ,\in \vee q\right) $-fuzzy interior ideal of $M$
and let $x,a,y,z\in M$ then by using $(1)$, we have%
\begin{eqnarray*}
F((xa)(yz)) &\geq &\min \left \{ F(xa),F(yz),0.5\right \} \geq \min \left \{
F(xa),F(y),F(z),0.5,0.5\right \} \\
&=&\min \left \{ F((ex)a),F(y),F(z),0.5\right \} =\min \left \{
F((ax)e),F(y),F(z),0.5\right \} \\
&\geq &\min \left \{ F(x),F(y),F(z),0.5,0.5\right \} =\min \left \{
F(x),F(y),F(z),0.5\right \} .
\end{eqnarray*}

Therefore $F$ is an $\left( \in ,\in \vee q\right) -$fuzzy $(1,2)$ ideal of
a right modular groupoid $M$.
\end{proof}

\begin{theorem}
Let $\Phi :M\longrightarrow M^{%
{\acute{}}%
}$ be a homomorphism of right modular groupoids and $F$ and $G$ be $\left(
\in ,\in \vee q\right) $-fuzzy interior ideals of $M$ and $M^{%
{\acute{}}%
}$,respectively. Then

$\left( i\right) $ $\Phi ^{-1}\left( G\right) $ is an $\left( \in ,\in \vee
q\right) $-fuzzy interior ideal of $M$.

$\left( ii\right) $ If for any subset $X$ of $M$ there exist $x_{\circ }\in
X $ such that $F\left( x_{\circ }\right) =\dbigvee \left \{ F\left( x\right)
\mid x\in X\right \} $, then $\Phi \left( F\right) $ is an $\left( \in ,\in
\vee q\right) $-fuzzy interior ideal of $M%
{\acute{}}%
$ when $\Phi $ is onto.
\end{theorem}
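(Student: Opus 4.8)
The plan is to reduce both parts to the pointwise inequalities supplied by Theorem~\ref{th main}. That theorem shows that a fuzzy set $H$ is an $(\in ,\in \vee q)$-fuzzy interior ideal precisely when it satisfies $(A3)$ $H(xy)\geq \min \{H(x),H(y),0\cdot 5\}$ and $(A4)$ $H((xa)y)\geq \min \{H(a),0\cdot 5\}$. Throughout I would work with the usual definitions of the induced fuzzy sets, namely $\Phi ^{-1}(G)(x)=G(\Phi (x))$ for the preimage, and $\Phi (F)(y)=\bigvee \{F(x):\Phi (x)=y\}$ when the fibre $\Phi ^{-1}(y)$ is non-empty and $\Phi (F)(y)=0$ otherwise for the image. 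Hence in each part it suffices to verify $(A3)$ and $(A4)$ for the fuzzy set in question.

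For $(i)$ I would argue directly. Since $\Phi$ is a homomorphism we have $\Phi (xy)=\Phi (x)\Phi (y)$ and $\Phi ((xa)y)=(\Phi (x)\Phi (a))\Phi (y)$, so $\Phi ^{-1}(G)(xy)=G(\Phi (x)\Phi (y))$ and $\Phi ^{-1}(G)((xa)y)=G((\Phi (x)\Phi (a))\Phi (y))$. Applying $(A3)$ and $(A4)$ for $G$ on $M'$ to the right-hand sides, and then rewriting $G(\Phi (\cdot ))$ as $\Phi ^{-1}(G)(\cdot )$, yields exactly $\Phi ^{-1}(G)(xy)\geq \min \{\Phi ^{-1}(G)(x),\Phi ^{-1}(G)(y),0\cdot 5\}$ and $\Phi ^{-1}(G)((xa)y)\geq \min \{\Phi ^{-1}(G)(a),0\cdot 5\}$. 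This part needs nothing beyond $\Phi$ being a homomorphism.

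For $(ii)$ the attainment hypothesis does the real work. To check $(A3)$ at $u,v\in M'$, I would apply the hypothesis to the fibres $\Phi ^{-1}(u)$ and $\Phi ^{-1}(v)$ to select $a\in \Phi ^{-1}(u)$ and $b\in \Phi ^{-1}(v)$ with $\Phi (F)(u)=F(a)$ and $\Phi (F)(v)=F(b)$; since $\Phi (ab)=uv$, the product $ab$ lies in the fibre over $uv$, so $\Phi (F)(uv)\geq F(ab)\geq \min \{F(a),F(b),0\cdot 5\}=\min \{\Phi (F)(u),\Phi (F)(v),0\cdot 5\}$. To check $(A4)$ at $u,w,v\in M'$, I would again pick $c\in \Phi ^{-1}(w)$ realizing $\Phi (F)(w)=F(c)$, and then use surjectivity of $\Phi$ to choose any $x_{0}\in \Phi ^{-1}(u)$ and $y_{0}\in \Phi ^{-1}(v)$; then $(x_{0}c)y_{0}$ lies in the fibre over $(uw)v$, giving $\Phi (F)((uw)v)\geq F((x_{0}c)y_{0})\geq \min \{F(c),0\cdot 5\}=\min \{\Phi (F)(w),0\cdot 5\}$.

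The step I expect to be the main obstacle is the correct handling of the supremum defining the image. The inequalities close only because $ab$ (respectively $(x_{0}c)y_{0}$) is a single element of the relevant fibre, so its $F$-value is a lower bound for the supremum $\Phi (F)$ of the corresponding product, while the attainment hypothesis is exactly what guarantees that the suprema defining $\Phi (F)(u)$, $\Phi (F)(v)$, $\Phi (F)(w)$ are realized by genuine elements $a,b,c$; without it one could only bound these values by suprema and the chain would not connect. Surjectivity enters only to ensure the fibres over $u$ and $v$ in $(A4)$ are non-empty so that the factors $x_{0},y_{0}$ exist.
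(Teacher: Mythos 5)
Your argument is correct and complete. Note that the paper itself gives no proof of this theorem at all --- its entire ``proof'' is the sentence ``It is same as in [4],'' deferring to Jun and Song's semigroup paper --- so there is nothing to compare against except the standard argument, which is exactly what you have written: part $(i)$ is the routine computation $\Phi ^{-1}(G)(\cdot )=G(\Phi (\cdot ))$ combined with $(A3)$--$(A4)$ for $G$, and part $(ii)$ correctly isolates the one non-trivial point, namely that the sup-attainment hypothesis is what lets you replace the suprema $\Phi (F)(u)$, $\Phi (F)(v)$, $\Phi (F)(w)$ by values of $F$ at actual fibre elements so that the chain of inequalities closes, with surjectivity needed only to make the relevant fibres non-empty. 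Your proposal supplies the details the paper omits.
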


\begin{proof}
It is same as in [4].
\end{proof}

\bigskip

{\huge Completely Regular Right Modular Groupoids}

\bigskip

\begin{definition}
A right modular groupoid $M$ is called regular, if for each $a\in M$ there
exist $x\in M$ such that $a=\left( ax\right) a$.
\end{definition}

\begin{definition}
A right modular groupoid $M$ is called left (right) regular, if for each $%
a\in M$ there exist $z\in M$ $(y\in M)$ such that $a=za^{2}$ $\left(
a=a^{2}y\right) $.
\end{definition}

\begin{definition}
A right modular groupoid $M$ is called completely regular if it is regular,
left regular and right regular.
\end{definition}

\begin{example}
Let $M=\{1,2,3,4\}$ and the binary operation $"\circ "$ defined on $M$ as
follows:%
\begin{equation*}
\begin{tabular}{c|cccc}
$\circ $ & $1$ & $2$ & $3$ & $4$ \\ \hline
$1$ & $4$ & $1$ & $2$ & $3$ \\ 
$2$ & $3$ & $4$ & $1$ & $2$ \\ 
$3$ & $2$ & $3$ & $4$ & $1$ \\ 
$4$ & $1$ & $2$ & $3$ & $4$%
\end{tabular}%
\end{equation*}%
Then clearly $(M,\circ )$ is a completely regular right modular groupoid
with left identity $4$.
\end{example}

\begin{theorem}
If $M$ is a right modular groupoid with left identity, then it is completely
regular if and only if $a\in (a^{2}M)a^{2}$.
\end{theorem}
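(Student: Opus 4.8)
The plan is to prove the two implications separately: for sufficiency I extract the three regularity conditions from the factorization $a\in(a^{2}M)a^{2}$, and for necessity I assemble that factorization from the three conditions. Throughout I will use only the left invertive law $(1)$, the medial law $(2)$, the paramedial law $(3)$ and identity $(4)$, all of which are available since $M$ has a left identity $e$.

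For sufficiency, suppose $a=(a^{2}m)a^{2}$ for some $m\in M$. Left regularity is immediate: writing $z=a^{2}m$ gives $a=za^{2}$. Right regularity falls straight out of the paramedial law, since $a=(a^{2}m)(aa)=(aa)(ma^{2})=a^{2}(ma^{2})$, so $a=a^{2}y$ with $y=ma^{2}$. The only genuine computation is ordinary regularity, and here I would feed the right-regular expression into the left-regular one: starting from $a=za^{2}=z(aa)$ and replacing the first factor $a$ by $a^{2}y$, the left invertive law turns $(a^{2}y)a$ into $(ay)a^{2}$, after which identity $(4)$ pulls $z$ inside to produce $z((ay)a^{2})=(ay)(za^{2})$. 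Since $za^{2}=a$, this collapses to $a=(ay)a$, which is exactly regularity with witness $y$. Hence $M$ is completely regular.

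For necessity, suppose $M$ is completely regular, with witnesses $a=(ap)a$, $a=za^{2}$ and $a=a^{2}y$. The goal is to manufacture the shape $(a^{2}m)a^{2}$. I would begin from the regular expression $a=(ap)a$, replace the trailing $a$ by $za^{2}$, and apply the paramedial law to $(ap)(za^{2})$ to obtain $a=(a^{2}z)(pa)$, which already exhibits the desired left-hand $a^{2}$. Next I would replace the remaining free $a$ inside $pa$ by $a^{2}y$ and use $(4)$ to rewrite $p(a^{2}y)=a^{2}(py)$, giving $a=(a^{2}z)(a^{2}(py))$. Two applications of the left invertive law then reorganize this into $(wa^{2})a^{2}$ with $w=z(py)$, and a further application of the paramedial law converts $(wa^{2})a^{2}=(wa^{2})(aa)$ into $a^{2}(a^{2}w)$.

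The \emph{main obstacle} is precisely this last stretch: once two $a^{2}$ blocks sit side by side, the medial law tends to fuse them into an $a^{4}$, which destroys the target form. The device that avoids this is to re-separate them with the paramedial law while exploiting the left identity: writing $w=ew$ and applying $(3)$ to $a^{2}w=(aa)(ew)$ yields $a^{2}w=(we)a^{2}$, so that $a=a^{2}((we)a^{2})$. A final use of the paramedial identity $a^{2}(na^{2})=(a^{2}n)a^{2}$, with $n=we$, delivers $a=(a^{2}(we))a^{2}\in(a^{2}M)a^{2}$, completing the proof. I expect the bookkeeping of which occurrence of $a$ to substitute at each stage, rather than any single identity, to be the part requiring the most care.
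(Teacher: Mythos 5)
Your proof is correct, and both directions follow the same route as the paper's: direct element-level computation combining the three regularity witnesses via the left invertive law, the paramedial law and identity $(4)$, with every individual step you describe checking out (in particular the two applications of $(1)$ giving $(a^{2}z)(a^{2}(py))=((z(py))a^{2})a^{2}$ and the final paramedial conversions into the form $(a^{2}(we))a^{2}$). Your derivation of regularity in the sufficiency direction, $a=z((a^{2}y)a)=z((ay)a^{2})=(ay)(za^{2})=(ay)a$, is actually shorter and more explicit than the paper's set-containment chain ending in $(aM)a$, but the underlying approach is essentially identical.
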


\begin{proof}
Let $M$ be a completely regular right modular groupoid with left identity,
then for each $a\in M$ there exist $x,y,z\in M$ such that $a=(ax)a,$ $%
a=a^{2}y$ and $a=za^{2}$, so by using $(1),(4)$ and $(3)$, we get%
\begin{eqnarray*}
a &=&(ax)a=((a^{2}y)x)(za^{2})=((xy)a^{2})(za^{2})=((za^{2})a^{2})(xy) \\
&=&((a^{2}a^{2})z)(xy)=((xy)z)(a^{2}a^{2})=a^{2}(((xy)z)a^{2}) \\
&=&(ea^{2})(((xy)z)a^{2})=(a^{2}((xy)z))(a^{2}e)=(a^{2}((xy)z))((aa)e) \\
&=&(a^{2}((xy)z))((ea)a)=(a^{2}((xy)z))a^{2}\in (a^{2}M)a^{2}\text{.}
\end{eqnarray*}

Conversely, assume that $a\in (a^{2}M)a^{2}$ then clearly $a=a^{2}y$ and $%
a=za^{2}$, now using $(3),(1)$ and $(4)$, we get%
\begin{eqnarray*}
a &\in &(a^{2}M)a^{2}=\left( a^{2}M\right) \left( aa\right) =\left(
aa\right) \left( Ma^{2}\right) =\left( aa\right) \left( M\left( aa\right)
\right) \\
&=&\left( aa\right) \left( \left( eM\right) )(aa)\right) =\left( aa\right)
\left( \left( aa\right) \left( Me\right) \right) \subseteq \left( aa\right)
\left( \left( aa\right) M\right) \\
&=&(aa)\left( a^{2}M\right) =\left( (a^{2}M\right) a)a=\left( \left( \left(
aa\right) M\right) a\right) a=\left( \left( aM\right) (aa)\right) a \\
&=&\left( a\left( (Ma\right) a)\right) a\subseteq (aM)a.
\end{eqnarray*}

Therefore $M$ is completely regular.
\end{proof}

\begin{theorem}
If $M$ is a completely regular right modular groupoid, then every $\left(
\in ,\in \vee q\right) $-fuzzy $(1,2)$ ideal of $M$ is an $\left( \in ,\in
\vee q\right) $-fuzzy bi-ideal of $M$.
\end{theorem}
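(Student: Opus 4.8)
The plan is to reduce the claim to the single nontrivial bi-ideal inequality and then to exhibit $(xy)z$ as an instance of the left-hand side of the $(1,2)$-axiom.

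First I would invoke the characterization of $\left( \in ,\in \vee q\right)$-fuzzy bi-ideals by conditions $(B3)$ and $(B4)$. Condition $(B3)$, $F(xy)\ge \min \{F(x),F(y),0.5\}$, is verbatim condition $(i)$ of an $\left( \in ,\in \vee q\right)$-fuzzy $(1,2)$ ideal, so it holds automatically; thus the whole problem collapses to verifying $(B4)$, which—in agreement with $(B2)$—asserts $F((xy)z)\ge \min \{F(x),F(z),0.5\}$.

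For $(B4)$ the idea is to use complete regularity to rewrite $(xy)z$ so that the arbitrary middle factor $y$ is driven into the one slot of the template $(x'a)(y'z')$ that does not enter the bound of axiom $(ii)$, while copies of $x$ and $z$ occupy the three constrained slots. Using right regularity I would write $z=z^{2}s$ for some $s\in M$ and then transport factors by the laws $(1)$, $(2)$ and $(4)$:
\[ (xy)z=(xy)(z^{2}s)=z^{2}\big((xy)s\big)=z^{2}\big((sy)x\big)=\big(z(sy)\big)(zx), \]
where the second equality is $(4)$, the third is the left invertive law $(1)$, and the last is the medial law $(2)$ applied to $z^{2}=zz$. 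The end product has the form $(x'a)(y'z')$ with $x'=z$, $a=sy$, $y'=z$, $z'=x$, so condition $(ii)$ yields
\[ F((xy)z)\ge \min \{F(z),F(z),F(x),0.5\}=\min \{F(x),F(z),0.5\}, \]
which is precisely $(B4)$, completing the argument.

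The step I expect to be the genuine obstacle is discovering this rearrangement, not the final one-line estimate. The middle letter $y$ together with the regularity witness $s$ must be funnelled into the unique unconstrained position $a$ of $(x'a)(y'z')$, leaving $z,z,x$ in the three constrained positions. A single application of the medial (or paramedial) law to $(xy)z$ or to $(xy)(z^{2}s)$ instead drops $y$ into one of the constrained slots $y'$ or $z'$ and spoils the bound; it is the intervening use of the substitution identity $(4)$ that repositions the factors so that the concluding medial step places $y$ correctly. Throughout I rely on the presence of a left identity, as in the preceding results of this section, since the laws $(3)$ and $(4)$ presuppose one.
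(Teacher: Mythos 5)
Your proof is correct, but it follows a genuinely different and considerably shorter route than the paper's. The paper decomposes the \emph{first} letter via the full completely regular form $x=(x^{2}b)x^{2}$, rewrites $(xa)y$ as $(ya)((x^{2}b)x^{2})$ by the left invertive law, applies axiom $(ii)$ of the $(1,2)$-ideal once, and is then left with the residual term $F(x^{2}b)$, which it bounds only after re-substituting $x=(x^{2}b)x^{2}$ a second time and running a long chain of applications of $(1)$, $(3)$ and $(4)$ ending in a second application of axiom $(ii)$ --- roughly seventeen displayed rewriting steps in all. You instead decompose the \emph{last} letter using only right regularity, $z=z^{2}s$, and reach the template $(xy)z=(z(sy))(zx)$ in three steps ($(4)$, then $(1)$, then the medial law), after which a single application of axiom $(ii)$ gives $\min\{F(z),F(z),F(x),0.5\}$ with no residual terms to control. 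Each step of your rearrangement checks out, and your reading of $(B4)$ as $F((xy)z)\geq\min\{F(x),F(z),0.5\}$ (correcting the paper's typographical $F(y)$ in Theorem 2 to agree with $(B2)$) is the right one; your observation that $(B3)$ is verbatim axiom $(i)$ matches the paper's implicit treatment. What your approach buys is economy and a weaker regularity hypothesis (right regularity alone, rather than the combined $(a^{2}M)a^{2}$ form); what it shares with the paper is the reliance on law $(4)$ and hence on a left identity, which the theorem statement omits but the paper's own proof also uses (it invokes $e$ explicitly), so you are right to flag that hypothesis rather than being penalized by it.
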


\begin{proof}
Let $M$ be a completely regular and $F$ is an $\left( \in ,\in \vee q\right) 
$-fuzzy $(1,2)$ ideal of $M.$ Then for $x\in M$ there exist $b\in M$ such
that $x=(x^{2}b)x^{2}$, so by using $(1)$ and $(4)$, we have

\begin{eqnarray*}
F((xa)y) &=&F((((x^{2}b)x^{2})a)y)=F((ya)((x^{2}b)x^{2})) \\
&\geq &\min \left \{ F(y),F(x^{2}b),F(x^{2}),0.5\right \} \\
&\geq &\min \left \{ F(y),F(x^{2}b),F(x),F(x),0.5,0.5\right \} \\
&=&\min \left \{ F(y),F(x^{2}b),F(x),0.5\right \} \\
&=&\min \left \{ F((xx)b),F(x),F(y),0.5\right \} \\
&=&\min \left \{ F((bx)x),F(x),F(y),0.5\right \} \\
&\geq &\min \left \{ F(bx),F(x),0.5,F(x),F(y),0.5\right \} \\
&=&\min \left \{ F(bx).F(x),F(y),0.5\right \} \\
&=&\min \left \{ F(b((x^{2}b)x^{2})),F(x),F(y),0.5\right \} \\
&=&\min \{F((x^{2}b)(bx^{2})),F(x),F(y),0.5\} \\
&=&\min \left \{ F(((bx^{2})b)x^{2}),F(x),F(y),0.5\right \} \\
&=&\min \left \{ F((((eb)x^{2})b)(xx)),F(x),F(y),0.5\right \} \\
&=&\min \left \{ F((((x^{2}b)e)b)(xx)),F(x),F(y),0.5\right \} \\
&=&\min \left \{ F(((be)(x^{2}b))(xx)),F(x),F(y),0.5\right \} \\
&=&\min \left \{ F((x^{2}((be)b))(xx)),F(x),F(y),0.5\right \} \\
&\geq &\min \left \{ F(x^{2}),F(x),F(x),0.5,F(x),F(y),0.5\right \} \\
&\geq &\min \left \{ F(x),F(x),0.5,F(x),F(x),F(x),F(y),0.5\right \} \\
&=&\min \left \{ F(x),F(y),0.5\right \} \text{.}
\end{eqnarray*}

Therefore, $F$ is an $\left( \in ,\in \vee q\right) $-fuzzy bi-ideal of $M$.
\end{proof}

\begin{theorem}
If $M$ is a completely regular right modular groupoid, then every $\left(
\in ,\in \vee q\right) $-fuzzy $(1,2)$ ideal of $M$ is an $\left( \in ,\in
\vee q\right) $-fuzzy interior ideal of $M$.
\end{theorem}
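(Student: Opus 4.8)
The plan is to verify the two defining conditions of an $(\in,\in\vee q)$-fuzzy interior ideal in the equivalent form supplied by Theorem \ref{th main}, namely $(A3)$ $F(uv)\geq\min\{F(u),F(v),0.5\}$ and $(A4)$ $F((ua)v)\geq\min\{F(a),0.5\}$. Condition $(A3)$ is literally condition $(i)$ in the definition of an $(\in,\in\vee q)$-fuzzy $(1,2)$ ideal, so it holds by hypothesis and nothing need be done there. Hence the whole work is to establish $(A4)$: for all $x,a,y\in M$ one must show $F((xa)y)\geq\min\{F(a),0.5\}$, where now $x,y$ are arbitrary and only the interior letter $a$ is available to bound the value from below.

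The engine of the proof is complete regularity in the form already isolated, namely $a\in(a^{2}M)a^{2}$, so that $a=(a^{2}c)a^{2}$ for some $c\in M$; if convenient one may also use the separate expressions $a=a^{2}y_{0}$, $a=z_{0}a^{2}$ and $a=(at_{0})a$ coming from right-, left- and ordinary regularity. Substituting such an expression for the single interior occurrence of $a$ in $(xa)y$ manufactures several copies of $a$ surrounding the arbitrary letters $x,y$ (and introduces the auxiliary letter $c$). I would then push $(xa)y$, by repeated use of the left invertive law $(1)$, the medial law $(2)$, the paramedial law $(3)$ and identity $(4)$, into the shape $(u\,m)(v\,w)$ demanded by condition $(ii)$ of the $(1,2)$-ideal definition, arranging that $u,v,w$ are words in $a$ alone while the free middle slot $m$ swallows the foreign letters. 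Applying $(ii)$ then yields $F((xa)y)\geq\min\{F(u),F(v),F(w),0.5\}$.

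It remains to see that each of $F(u),F(v),F(w)$ is itself $\geq\min\{F(a),0.5\}$. Since $u,v,w$ are products built only from copies of $a$, this follows by iterating condition $(i)$: from $F(a^{2})=F(aa)\geq\min\{F(a),F(a),0.5\}=\min\{F(a),0.5\}$ and, inductively, $F(ss')\geq\min\{F(s),F(s'),0.5\}\geq\min\{F(a),0.5\}$ for any two such $a$-words $s,s'$, every $a$-word has value at least $\min\{F(a),0.5\}$. Combining this with the displayed inequality collapses the right-hand side to $\min\{F(a),0.5\}$, which is exactly $(A4)$; together with $(A3)$ this shows that $F$ is an $(\in,\in\vee q)$-fuzzy interior ideal of $M$.

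The hard part will be the rewriting step. Unlike the preceding theorem, where the two controlled letters sit at the two ends of $(xa)y$ and the lone arbitrary letter drops straight into a free slot, here the only controlled letter $a$ sits in the interior while \emph{both} end letters $x,y$ are arbitrary. Consequently strictly more copies of $a$ must be generated before $x$ and $y$ can be expelled, and a single application of $(ii)$ typically leaves a residual factor still carrying an arbitrary letter (for instance a factor of the form $(u'q)(v'w')$). I would dispose of such a residual factor exactly as the foreign letter $b$ is disposed of in the proof of the preceding theorem: re-substitute $a=(a^{2}c)a^{2}$ inside that factor so as to create copies of $a$ on both sides of the stray letter, reapply condition $(ii)$ so that the stray letter again lands in the free middle slot, and repeat until no arbitrary letter survives. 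Finding the precise order of the laws $(1)$–$(4)$ that realizes this herding, so that all of $x$, $y$ (and $c$) end up in free slots while three genuinely independent $a$-word positions remain, is the sole real obstacle; everything else is the bookkeeping described above.
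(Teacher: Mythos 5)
Your strategy is the right one and it is in fact the same as the paper's: reduce to the equivalent conditions $(A3)$/$(A4)$, note that $(A3)$ is literally condition $(i)$ of the $(1,2)$-ideal definition, substitute the complete-regularity expression for the interior letter, and rewrite the resulting word by the laws $(1)$--$(4)$ until it has the shape $(u\,m)(v\,w)$ with controlled letters in the $u,v,w$ slots and all foreign letters confined to $m$, so that condition $(ii)$ applies. The problem is that the proposal stops exactly where the proof has to begin: the claim that $(xa)y$ can actually be herded into that shape is never established, and you yourself flag it as ``the sole real obstacle.'' That rewriting \emph{is} the mathematical content of the theorem; it is not obvious a priori that the laws $(1)$--$(4)$ suffice to expel both end letters $x$ and $y$ into a single free slot, and a proof must exhibit a concrete chain of identities (or a structural argument that one exists). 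As written, the key step is a conjecture accompanied by a contingency plan (iterated re-substitution) that is likewise not carried out.

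For comparison, the paper does supply such a chain: writing the controlled letter as $(a^{2}c)a^{2}$ (in your notation) and applying $(4),(1),(2),(3)$ about a dozen times --- together with the left identity $e$, which both you and the paper tacitly need, since the characterization $a\in(a^{2}M)a^{2}$ is itself proved only under that hypothesis, even though the theorem statement omits it --- the word $(xa)y$ is transformed into one of the form $(a\,m)(aa)$, where $m$ is a single word absorbing $x$, $y$, $c$ and $e$. All three controlled positions are then the bare letter $a$, so one application of condition $(ii)$ gives $\min\{F(a),F(a),F(a),0.5\}=\min\{F(a),0.5\}$ directly; your auxiliary lemma that every $a$-word has $F$-value at least $\min\{F(a),0.5\}$ (by iterating condition $(i)$) is correct but turns out to be unnecessary, as does the second round of re-substitution you anticipate. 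To complete your argument you must produce the explicit rewriting.
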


\begin{proof}
Let $M$ be a completely regular and $F$ is an $\left( \in ,\in \vee q\right) 
$-fuzzy $(1,2)$ ideal of $M.$Then for $x\in M$ there exist $y\in M$ such
that $x=(x^{2}y)x^{2}$, so by using $(4),(1),(2)$ and $(3)$, we have%
\begin{eqnarray*}
F((ax)b) &=&F((a((x^{2}y)x^{2}))b)=F(((x^{2}y)(ax^{2}))b) \\
&=&F((b(ax^{2}))(x^{2}y))=F((b(a(xx)))(x^{2}y)) \\
&=&F((b(x(ax)))(x^{2}y))=F((x(b(ax)))((xx)y)) \\
&=&F((x(b(ax)))((yx)x))=F((x(yx))((b(ax))x)) \\
&=&F(((ex)(yx))((x(ax))b))=F(((xx)(ye))((a(xx))b)) \\
&=&F(((xx)(ye))((b(xx))a)=F(((xx)(b(xx)))((ye)a)) \\
&=&F((b((xx)(xx)))((ye)a))=F((b(ye))(((xx)(xx))a)) \\
&=&F((a((xx)(xx)))((ye)b))=F(((xx)(a(xx)))((ye)b)) \\
&=&F((((ye)b)(a(xx)))(xx))=F((((ye)b)(x(ax)))(xx)) \\
&=&F((x(((ye)b)(ax)))(xx))=F((xc)(xx)) \\
&\geq &\min \{F(x),F(x),F(x),0.5\}=\min \{F(x),0.5\} \text{.}
\end{eqnarray*}

Therefore $F$ is an $\left( \in ,\in \vee q\right) $-fuzzy interior ideal of 
$M$.
\end{proof}

\begin{theorem}
Let $F$ be an$\left( \in ,\in \vee q\right) -$fuzzy bi-ideal of a right
modular groupoid $M$. If $M$ is a completely regular and $F(a)<0.5$ for all $%
x\in M$ then $F(a)=F(a^{2})$ for all $a\in M$.
\end{theorem}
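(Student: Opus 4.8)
The plan is to prove the statement by establishing the two inequalities $F(a^{2})\geq F(a)$ and $F(a)\geq F(a^{2})$ for an arbitrary $a\in M$ and then combining them. The whole argument rests on the observation that, since $F(b)<0.5$ for \emph{every} $b\in M$ (in particular for $b=a$ and for $b=a^{2}$), each cutoff $\min\{\,\cdot\,,0.5\}$ occurring in the bi-ideal conditions collapses to the minimum of the relevant $F$-values alone. I will use the reformulation of Definition~5 provided by the theorem following it, namely $(B3)$ $F(xy)\geq \min\{F(x),F(y),0.5\}$ and $(B4)$ $F((xy)z)\geq \min\{F(x),F(z),0.5\}$, where in $(B4)$ it is the first and third arguments that control the bound, in agreement with condition $(B2)$.

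First I would treat $F(a^{2})\geq F(a)$. Applying $(B3)$ with $x=y=a$ gives $F(a^{2})=F(aa)\geq \min\{F(a),F(a),0.5\}=\min\{F(a),0.5\}$, and since $F(a)<0.5$ the right-hand side is exactly $F(a)$; hence $F(a^{2})\geq F(a)$.

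For the reverse inequality $F(a)\geq F(a^{2})$ I would bring in complete regularity through the characterization already proved above: in a completely regular right modular groupoid one has $a\in (a^{2}M)a^{2}$, so there is some $m\in M$ with $a=(a^{2}m)a^{2}$. Feeding this into $(B4)$ with $x=a^{2}$, $y=m$, $z=a^{2}$ yields $F(a)=F((a^{2}m)a^{2})\geq \min\{F(a^{2}),F(a^{2}),0.5\}=\min\{F(a^{2}),0.5\}=F(a^{2})$, the last equality again using $F(a^{2})<0.5$. Combining the two inequalities gives $F(a)=F(a^{2})$, as required.

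The step that carries all the weight is obtaining the representation $a=(a^{2}m)a^{2}$; this is exactly where complete regularity is used (and, behind the characterization theorem, the medial and paramedial laws together with the left identity), after which both directions are one-line applications of $(B3)$ and $(B4)$. The one point I would be careful about is the reading of $(B4)$: the lower bound must be governed by the outer factors $a^{2}$ and $a^{2}$, not by the middle factor $m$, since $F(m)$ is not under control. This is precisely what $(B2)$ ensures, as it quantifies over the outer fuzzy points $x_{t}$ and $z_{r}$ while leaving the middle element free.
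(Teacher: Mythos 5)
Your proposal is correct and follows essentially the same route as the paper: the paper likewise writes $a=(a^{2}x)a^{2}$ by complete regularity, applies the bi-ideal condition $(B4)$ to get $F(a)\geq \min\{F(a^{2}),0.5\}=F(a^{2})$, and then applies $(B3)$ to $F(aa)$ to get $F(a^{2})\geq F(a)$, using the hypothesis $F<0.5$ to drop the cutoff in both steps. The only difference is presentational — the paper chains the two inequalities in one display while you state them separately — so there is nothing substantive to add.
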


\begin{proof}
Let $a\in M$ then there exist $x\in M$ such that $a=(a^{2}x)a^{2}$, then we
have

\begin{eqnarray*}
F(a) &=&F((a^{2}x)a^{2})\geq \min \{F(a^{2}),F(a^{2}),0.5\} \\
&=&\min \{F(a^{2}),0.5\}=F(a^{2})=F(aa) \\
&\geq &\min \{F(a),F(a),0.5\}=F(a)\text{.}
\end{eqnarray*}

Therefore $F(a)=F(a^{2})$.
\end{proof}

\begin{theorem}
Let $F$ be an$\left( \in ,\in \vee q\right) $-fuzzy interior ideal of a
right modular groupoid $M$. If $M$ is a completely regular and $F(a)<0.5$
for all $x\in M$ then $F(a)=F(a^{2})$ for all $a\in M$.
\end{theorem}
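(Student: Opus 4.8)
The plan is to mirror the proof of the preceding theorem for bi-ideals, the only real change being that the interior-ideal inequality controls the \emph{middle} factor of a triple product rather than the two outer factors. By Theorem \ref{th main} I may use the equivalent conditions $(A3)$ $F(xy)\ge\min\{F(x),F(y),0.5\}$ and $(A4)$ $F((xa)y)\ge\min\{F(a),0.5\}$ in place of the defining conditions of an $(\in,\in\vee q)$-fuzzy interior ideal.

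First I would record the easy inequality. Since $F(a)<0.5$, condition $(A3)$ gives
\[ F(a^{2})=F(aa)\ge\min\{F(a),F(a),0.5\}=F(a), \]
so $F(a^{2})\ge F(a)$ holds without any further hypothesis.

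The substantive step is the reverse inequality, and for this the hard part is to exhibit a factorization of $a$ in which $a^{2}$ occupies the interior slot, i.e. a factorization $a=(u\,a^{2})v$; only then does $(A4)$ yield a lower bound for $F(a)$ in terms of $F(a^{2})$. To produce it I would invoke complete regularity to choose $x,s\in M$ with $a=(ax)a$ and $a=a^{2}s$, substitute the second expression for the leading factor $a$ of $(ax)a$, and then apply the left invertive law $(1)$ to the inner product:
\[ a=(ax)a=((a^{2}s)x)a=((xs)a^{2})a. \]
Note that only $(1)$ is used here, so no left identity is required. With $a$ now displayed in the form $((xs)a^{2})a$, condition $(A4)$ applied with interior element $a^{2}$ gives $F(a)\ge\min\{F(a^{2}),0.5\}$.

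Finally I would combine the two bounds. From $F(a^{2})\ge F(a)$ and $F(a)<0.5$ we have $\min\{F(a^{2}),0.5\}\ge\min\{F(a),0.5\}=F(a)$, so
\[ F(a)\ge\min\{F(a^{2}),0.5\}\ge F(a), \]
forcing $F(a)=\min\{F(a^{2}),0.5\}$; since this common value is strictly below $0.5$ it must equal $F(a^{2})$, whence $F(a)=F(a^{2})$. I expect the only delicate point to be selecting which occurrence of $a$ to replace in $(ax)a$ so that a single application of the left invertive law lands $a^{2}$ in the middle position; everything after that is the same two-sided squeeze used in the bi-ideal case.
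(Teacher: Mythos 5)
Your proof is correct, and it takes a genuinely different route from the paper's. Both arguments share the same skeleton---get $F(a^{2})\geq F(a)$ from condition $(A3)$, then exhibit a factorization of $a$ with $a^{2}$ in the interior slot so that $(A4)$ gives the reverse bound---but the factorizations differ. The paper starts from the characterization $a=(a^{2}x)a^{2}$ (which it establishes only for right modular groupoids \emph{with a left identity}) and then runs a long chain of rewritings using the left identity $e$ together with laws $(1)$, $(3)$ and $(4)$ to reach $a=((xa^{2})a^{2})e$ before applying $(A4)$ with interior element $a^{2}$. You instead combine the two separate conditions $a=(ax)a$ and $a=a^{2}s$ from the definition of complete regularity and use a single application of the left invertive law to get $a=((xs)a^{2})a$, then apply $(A4)$ directly. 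Your route is much shorter and, more importantly, does not require a left identity, so it proves the theorem exactly as stated (the paper's proof silently assumes a left identity that the statement does not mention). Your closing squeeze is slightly more roundabout than necessary: since the hypothesis $F(\cdot)<0.5$ applies to $a^{2}$ as well, one has $\min\{F(a^{2}),0.5\}=F(a^{2})$ at once, giving $F(a)\geq F(a^{2})\geq F(a)$; but the version you give is also valid.
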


\begin{proof}
Let $a\in M$ then there exist $x\in M$ such that $a=(a^{2}x)a^{2}$, using $%
(4),(1)$ and $(3)$, we have%
\begin{eqnarray*}
F(a) &=&F((a^{2}x)a^{2})=F((a^{2}x)(aa))=F(a((a^{2}x)a)) \\
&=&F(a((ax)a^{2}))=F((ea)((ax)a^{2}))=F((((ax)a^{2})a)e) \\
&=&F(((aa^{2})(ax))e)=F(((xa)(a^{2}a))e)=F((((a^{2}a)a)x)e) \\
&=&F((((aa)a^{2})x)e)=F(((xa^{2})(aa))e)=F(((xa^{2})a^{2})e) \\
&\geq &\min \{F(a^{2}),0.5\}=F(a^{2})=F(aa) \\
&\geq &\min \{F(a),F(a),0.5\} \geq \min \{F(a),0.5\}=F(a)\text{.}
\end{eqnarray*}

Therefore $F(a)=F(a^{2})$.
\end{proof}

$\bigskip $

${\huge (\in ,\in \vee q}_{k}{\huge )}${\huge -fuzzy Ideals in Right Modular
Groupoids}

\bigskip

It has been given in $[3]$ that $x_{t}q_{k}F$ is the generalizations of $%
x_{t}qF$, where $k$ is an arbitrary element of $[0,1)$ as $x_{t}q_{k}F$ if $%
F(x)+t+k>1$. If $x_{t}\in F$ or $x_{t}qF$ implies $x_{t}\in q_{k}F$. Here we
discuss the behavior of $(\in ,\in \vee q_{k})$-fuzzy left ideal, $(\in ,\in
\vee q_{k})$-fuzzy right ideal,$(\in ,\in \vee q_{k})$-fuzzy interior ideal, 
$(\in ,\in \vee q_{k})$-fuzzy bi-ideal, $(\in ,\in \vee q_{k})$-fuzzy
quasi-ideal in the completely regular right modular groupoid $M$.

\begin{definition}
A fuzzy subset $F$ of a right modular groupoid $M$ is called an $(\in ,\in
\vee q_{k})$-fuzzy subgroupoid of $M$ if for all $x,y\in M$ and $t,r\in
(0,1] $ the following condition holds%
\begin{equation*}
x_{t}\in F,y_{r}\in F\text{ implies }(xy)_{\min \{t,r\}}\in \vee q_{k}F\text{%
.}
\end{equation*}
\end{definition}

\begin{theorem}
Let $F$ be a fuzzy subset of $M$. Then $F$ is an $(\in ,\in \vee q_{k})$%
-fuzzy subgroupoid of $M$ if and only if $F(xy)\geq \min \{F(x),F(y),\frac{%
1-k}{2}\}$.
\end{theorem}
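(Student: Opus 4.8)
The plan is to prove the two directions of the biconditional separately, following the standard pattern for $(\in,\in\vee q_k)$-type characterizations, where the threshold $\tfrac{1-k}{2}$ replaces the $0.5$ that appears in the ordinary $(\in,\in\vee q)$ case (since $q_k$ uses $F(x)+t+k>1$, the critical value $t$ at which $x_t\in F$ and $x_tq_kF$ coincide is $t=\tfrac{1-k}{2}$).

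For the forward direction, I would assume $F$ is an $(\in,\in\vee q_k)$-fuzzy subgroupoid and argue by contradiction. Suppose there exist $x,y\in M$ with $F(xy)<\min\{F(x),F(y),\tfrac{1-k}{2}\}$. Then I can pick a value $t$ strictly between $F(xy)$ and $\min\{F(x),F(y),\tfrac{1-k}{2}\}$. Since $t\le F(x)$ and $t\le F(y)$ we have $x_t\in F$ and $y_t\in F$, so the hypothesis forces $(xy)_{\min\{t,t\}}=(xy)_t\in\vee q_kF$, i.e. either $F(xy)\ge t$ or $F(xy)+t+k>1$. The choice $F(xy)<t$ rules out the first disjunct. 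For the second, I use $t<\tfrac{1-k}{2}$, which gives $F(xy)+t+k<t+t+k=2t+k<(1-k)+k=1$, contradicting $F(xy)+t+k>1$. This contradiction establishes the inequality.

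For the converse, I would assume $F(xy)\ge\min\{F(x),F(y),\tfrac{1-k}{2}\}$ for all $x,y$ and verify the defining condition. Take $x_t\in F$ and $y_r\in F$, so $F(x)\ge t$ and $F(y)\ge r$; set $m=\min\{t,r\}$. Then $F(xy)\ge\min\{F(x),F(y),\tfrac{1-k}{2}\}\ge\min\{m,\tfrac{1-k}{2}\}$. I would split into two cases according to the size of $m$: if $m\le\tfrac{1-k}{2}$ then $F(xy)\ge m$, giving $(xy)_m\in F$; if $m>\tfrac{1-k}{2}$ then $F(xy)\ge\tfrac{1-k}{2}$, so $F(xy)+m+k>\tfrac{1-k}{2}+\tfrac{1-k}{2}+k=1$, giving $(xy)_mq_kF$. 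In either case $(xy)_m\in\vee q_kF$, which is exactly the required condition.

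I do not anticipate any serious obstacle here; the content is entirely the correct bookkeeping of the threshold $\tfrac{1-k}{2}$ and clean case analysis. The one point demanding care is the converse, where one must resist the temptation to conclude $(xy)_m\in F$ outright: when $m$ exceeds $\tfrac{1-k}{2}$ membership can genuinely fail and only the quasi-coincidence $q_k$ survives, so the case split on whether $m\le\tfrac{1-k}{2}$ is the crux of a correct argument. Both directions rely only on elementary arithmetic with $t,r,k$ and the definitions of $\in$ and $q_k$, with no appeal to the groupoid axioms beyond closure under the product $xy$.
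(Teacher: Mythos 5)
Your proof is correct and follows exactly the contradiction-plus-threshold argument that the paper intends: the paper itself only writes ``similar to the proof of Theorem \ref{th main}'', but the fully written-out analogue it gives for $(\in,\in\vee q_{k})$-fuzzy left ideals uses precisely your two steps (choose $t$ strictly between $F(xy)$ and $\min\{F(x),F(y),\tfrac{1-k}{2}\}$ for the forward direction, and split on whether the relevant level exceeds $\tfrac{1-k}{2}$ for the converse). Nothing further is needed.
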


\begin{proof}
It is similar to the proof of theorem \ref{th main}.
\end{proof}

\begin{definition}
A fuzzy subset $F$ of a right modular groupoid $M$ is called an $(\in ,\in
\vee q_{k})$-fuzzy left (right) ideal of $M$ if for all $x,y\in M$ and $%
t,r\in (0,1]$ the following condition holds%
\begin{equation*}
y_{t}\in F\text{ implies }\left( xy\right) _{t}\in \vee q_{k}F\text{ }\left(
y_{t}\in F\text{ implies }\left( yx\right) _{t}\in \vee q_{k}F\right) \text{.%
}
\end{equation*}
\end{definition}

\begin{theorem}
Let $F$ be a fuzzy subset of $M$. Then $F$ is an $(\in ,\in \vee q_{k})$%
-fuzzy left (right) ideal of $M$ if and only if $F(xy)\geq \min \{F(y),\frac{%
1-k}{2}\} \left( F(xy)\geq \min \{F(x),\frac{1-k}{2}\} \right) $.
\end{theorem}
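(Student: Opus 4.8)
The plan is to prove the equivalence between the fuzzy-point formulation of an $(\in,\in\vee q_k)$-fuzzy left (right) ideal and the pointwise inequality $F(xy)\geq\min\{F(y),\frac{1-k}{2}\}$, following exactly the template established in Theorem \ref{th main} (which the paper says proves the analogous statement for interior ideals). I will treat the left-ideal case in full; the right-ideal case is completely symmetric, replacing the product $xy$ by $yx$ and the hypothesis $y_t\in F$ by the same, so I will only remark on it at the end. Throughout, the key numerical fact driving the appearance of $\frac{1-k}{2}$ is that $q_k$-coincidence means $F(x)+t+k>1$, so the threshold value $\frac{1-k}{2}$ is precisely where the conditions $F(x)\geq t$ and $F(x)+t+k>1$ trade off.

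First I would prove the forward direction by contraposition. Assume $F$ is an $(\in,\in\vee q_k)$-fuzzy left ideal and suppose, for contradiction, that there exist $x,y\in M$ with $F(xy)<\min\{F(y),\frac{1-k}{2}\}$. I would then choose a value $t$ strictly between $F(xy)$ and $\min\{F(y),\frac{1-k}{2}\}$, so that $t\leq\frac{1-k}{2}$ and $t\leq F(y)$ but $t>F(xy)$. From $t\leq F(y)$ we get $y_t\in F$, so by hypothesis $(xy)_t\in\vee q_k F$; I then derive a contradiction by checking both disjuncts. The membership disjunct $(xy)_t\in F$ means $F(xy)\geq t$, contradicting $t>F(xy)$. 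The coincidence disjunct $(xy)_t q_k F$ means $F(xy)+t+k>1$; combined with $t\leq\frac{1-k}{2}$ this forces $F(xy)>1-t-k\geq 1-\frac{1-k}{2}-k=\frac{1-k}{2}\geq t$, again contradicting $t>F(xy)$. Hence no such $x,y$ exist and the inequality holds.

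For the converse, assume $F(xy)\geq\min\{F(y),\frac{1-k}{2}\}$ for all $x,y$, and let $y_t\in F$, i.e.\ $F(y)\geq t$. I would split into two cases according to the size of $t$. If $t\leq\frac{1-k}{2}$, then $F(xy)\geq\min\{F(y),\frac{1-k}{2}\}\geq\min\{t,\frac{1-k}{2}\}=t$, so $(xy)_t\in F$ and in particular $(xy)_t\in\vee q_k F$. If instead $t>\frac{1-k}{2}$, then $F(xy)\geq\min\{F(y),\frac{1-k}{2}\}=\frac{1-k}{2}$, whence $F(xy)+t+k>\frac{1-k}{2}+\frac{1-k}{2}+k=1$, so $(xy)_t q_k F$ and again $(xy)_t\in\vee q_k F$. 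This establishes the defining implication, completing the converse.

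I do not expect a genuine obstacle here: the argument is a routine case analysis, and the only point requiring care is the bookkeeping of the two cases around the threshold $\frac{1-k}{2}$ and making sure the strict and non-strict inequalities line up correctly (in particular that $t>\frac{1-k}{2}$ together with $k\geq 0$ really yields the strict inequality $F(xy)+t+k>1$). The right-ideal statement is obtained verbatim by replacing every occurrence of $xy$ by $yx$ and using $F(yx)\geq\min\{F(x),\frac{1-k}{2}\}$ with the hypothesis $x_t\in F$; no groupoid axiom (left invertive, medial, etc.) is invoked in either direction, since this characterization is purely about the fuzzy-point machinery and makes no use of the structure of $M$.
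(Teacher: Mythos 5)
Your proof is correct and follows essentially the same route as the paper's: a proof by contradiction choosing $t$ strictly between $F(xy)$ and $\min\{F(y),\frac{1-k}{2}\}$ for the forward direction, and a two-case split on $t$ versus $\frac{1-k}{2}$ for the converse. The only cosmetic difference is that you refute the two disjuncts of $\in\vee q_k$ separately where the paper directly verifies $(xy)_t\,\overline{\in\vee q_k}\,F$; these are the same argument.
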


\begin{proof}
Let $F$ be an $(\in ,\in \vee q_{k})$-fuzzy left ideal of $M$. Suppose that
there exist $x,y\in M$ such that $F\left( xy\right) <\min \left \{ F\left(
y\right) ,\frac{1-k}{2}\right \} $. Choose a $t\in (0,1]$ such that $F\left(
xy\right) <t<\min \left \{ F\left( y\right) ,\frac{1-k}{2}\right \} $. Then $%
y_{t}\in F$ but $\left( xy\right) _{t}\notin F$ and $F\left( xy\right) +t+k<%
\frac{1-k}{2}+\frac{1-k}{2}+k=1$, so $\left( xy\right) _{t}\overline{\in
\vee q_{k}}F$, a contradiction. Therefore $F(xy)\geq \min \{F(y),\frac{1-k}{2%
}\}$.

Conversely, assume that $F(xy)\geq \min \{F(y),\frac{1-k}{2}\}$. Let $x,y\in
M$ and $t\in (0,1]$ such that $y_{t}\in M$ then $F(y)\geq t.$ then $%
F(xy)\geq \min \{F(y),\frac{1-k}{2}\} \geq \min \{t,\frac{1-k}{2}\}$. If $t>%
\frac{1-k}{2}$ then $F(xy)\geq \frac{1-k}{2}$. So $F\left( xy\right) +t+k>%
\frac{1-k}{2}+\frac{1-k}{2}+k=1$, which implies that $\left( xy\right)
_{t}q_{k}F$. If $t\leq \frac{1-k}{2}$, then $F(xy)\geq t$. Therefore $%
F(xy)\geq t$ which implies that $\left( xy\right) _{t}\in F$. Thus $\left(
xy\right) _{t}\in \vee q_{k}F$.
\end{proof}

\begin{corollary}
A fuzzy subset $F$ of a right modular groupoid $M$ is called an $(\in ,\in
\vee q_{k})$-fuzzy ideal of $M$ if and only if $F(xy)\geq \min \{F(y),\frac{%
1-k}{2}\}$ and $F(xy)\geq \min \{F(x),\frac{1-k}{2}\}$.
\end{corollary}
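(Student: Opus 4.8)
The plan is to read this corollary as the two-sided packaging of the preceding theorem and to obtain it by simply conjoining that theorem's left and right halves. First I would recall from the Preliminaries that a subset of $M$ is a two-sided (or simply an) ideal precisely when it is both a left ideal and a right ideal; the fuzzy analogue should be defined in exactly the same spirit, so an $(\in,\in\vee q_{k})$-fuzzy ideal of $M$ is a fuzzy subset $F$ that is simultaneously an $(\in,\in\vee q_{k})$-fuzzy left ideal and an $(\in,\in\vee q_{k})$-fuzzy right ideal of $M$. With this reading fixed, the corollary becomes immediate from the preceding theorem, and the only real work is bookkeeping.

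For the forward implication, I would suppose $F$ is an $(\in,\in\vee q_{k})$-fuzzy ideal. Then $F$ is in particular an $(\in,\in\vee q_{k})$-fuzzy left ideal, so the left-ideal half of the preceding theorem yields $F(xy)\geq\min\{F(y),\frac{1-k}{2}\}$ for all $x,y\in M$; likewise $F$ is an $(\in,\in\vee q_{k})$-fuzzy right ideal, so the right-ideal half yields $F(xy)\geq\min\{F(x),\frac{1-k}{2}\}$. This produces both inequalities at once.

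For the converse, I would assume that both $F(xy)\geq\min\{F(y),\frac{1-k}{2}\}$ and $F(xy)\geq\min\{F(x),\frac{1-k}{2}\}$ hold for all $x,y\in M$. Applying the preceding theorem in the reverse direction to the first inequality shows $F$ is an $(\in,\in\vee q_{k})$-fuzzy left ideal, and applying it to the second shows $F$ is an $(\in,\in\vee q_{k})$-fuzzy right ideal. Being both, $F$ is an $(\in,\in\vee q_{k})$-fuzzy ideal, as required.

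I expect no genuine obstacle here: the substantive content, namely the case analysis of $t$ against the threshold $\frac{1-k}{2}$ that converts the fuzzy-point condition into the inequality, has already been carried out in the proof of the preceding theorem, and the two inequalities are logically independent so no interaction between them needs to be checked. The only step deserving a line of explicit comment is the convention that the defining condition of a two-sided fuzzy ideal is the conjunction of the left and right fuzzy-point conditions, mirroring the crisp definition from the Preliminaries; once that is stated, the argument reduces to a one-line appeal to the theorem for each side.
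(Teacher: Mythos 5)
Your proposal is correct and is exactly the argument the paper intends: the corollary is stated without proof precisely because it is the conjunction of the left-ideal and right-ideal halves of the preceding theorem, with ``$(\in,\in\vee q_{k})$-fuzzy ideal'' meaning both a left and a right fuzzy ideal, mirroring the crisp definition in the Preliminaries. Nothing further is needed.
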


\begin{definition}
A fuzzy subset $F$ of a right modular groupoid $M$ is called an $(\in ,\in
\vee q_{k})$-fuzzy bi-ideal of $M$ if for all $x,y,z\in M$ and $t,r\in (0,1]$
the following conditions hold

$\left( i\right) $ If $x_{t}\in F$ and $y_{r}\in M$ implies $\left(
xy\right) _{\min \left \{ t,r\right \} }\in \vee q_{k}F$,

$\left( ii\right) $ If $x_{t}\in F$ and $z_{r}\in M$ implies $\left( \left(
xy\right) z\right) _{\min \left \{ t,r\right \} }\in \vee q_{k}F$.
\end{definition}

\begin{theorem}
Let $F$ be a fuzzy subset of $M$. Then $F$ is an $(\in ,\in \vee q_{k})$%
-fuzzy bi-ideal of $M$ if and only if

$\left( i\right) F(xy)\geq \min \{F\left( x\right) ,F(y),\frac{1-k}{2}\}$
for all $x,y\in M$ and $k\in \lbrack 0,1)$,

$\left( ii\right) F((xy)z)\geq \min \{F(x),F\left( z\right) ,\frac{1-k}{2}\}$
for all $x,y,z\in M$ and $k\in \lbrack 0,1)$.
\end{theorem}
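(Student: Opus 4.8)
The plan is to establish both implications by exactly the contradiction-and-case-splitting technique already used above for the $(\in ,\in \vee q_{k})$-fuzzy left (right) ideal theorem, now applied to the two defining conditions of an $(\in ,\in \vee q_{k})$-fuzzy bi-ideal. In effect the threshold $0\cdot 5$ appearing in Theorem \ref{th main} is replaced throughout by $\frac{1-k}{2}$, and $\in \vee q$ by $\in \vee q_{k}$. I would treat the two conditions $(i)$ and $(ii)$ separately but in parallel, since each reduces to a single scalar inequality.

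For the forward direction, assume $F$ is an $(\in ,\in \vee q_{k})$-fuzzy bi-ideal. To prove $(i)$, I would suppose toward a contradiction that $F(xy)<\min \{F(x),F(y),\frac{1-k}{2}\}$ for some $x,y\in M$, choose $t\in (0,1]$ with $F(xy)<t\leq \min \{F(x),F(y),\frac{1-k}{2}\}$, and note that then $x_{t}\in F$ and $y_{t}\in F$, while $(xy)_{t}\notin F$ and $F(xy)+t+k<\frac{1-k}{2}+\frac{1-k}{2}+k=1$. Hence $(xy)_{t}\overline{\in \vee q_{k}}F$, contradicting condition $(i)$ of the definition. The argument for $(ii)$ is identical with $(xy)z$ in place of $xy$ and with $x_{t}\in F$, $z_{t}\in F$ as the two membership hypotheses.

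For the converse, assume the inequalities $(i)$ and $(ii)$ hold. Given $x_{t}\in F$ and $y_{r}\in F$, I would write $F(xy)\geq \min \{F(x),F(y),\frac{1-k}{2}\}\geq \min \{t,r,\frac{1-k}{2}\}$ and then split on whether $\min \{t,r\}\leq \frac{1-k}{2}$, which yields $F(xy)\geq \min \{t,r\}$ and hence $(xy)_{\min \{t,r\}}\in F$, or $\min \{t,r\}>\frac{1-k}{2}$, which yields $F(xy)+\min \{t,r\}+k>\frac{1-k}{2}+\frac{1-k}{2}+k=1$ and hence $(xy)_{\min \{t,r\}}q_{k}F$; in both cases $(xy)_{\min \{t,r\}}\in \vee q_{k}F$. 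Applying the same case split to $F((xy)z)$, now using $x_{t}\in F$ and $z_{r}\in F$, recovers condition $(ii)$ of the definition.

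Since every step is a direct transcription of earlier arguments, I do not expect a genuine obstacle here; the only point requiring care is the bookkeeping in $(ii)$, where the membership hypotheses are carried by $x$ and $z$ (not by the middle factor $y$), so that the relevant bound is $\min \{F(x),F(z),\frac{1-k}{2}\}$ and the case split is driven by $\min \{t,r\}$ with $t,r$ attached to $x,z$. Keeping this indexing straight is what distinguishes the bi-ideal computation from the plain subgroupoid one.
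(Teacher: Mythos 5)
Your proposal is correct and follows exactly the route the paper intends: the paper disposes of this theorem with ``It is similar to the proof of theorem \ref{th main}'', and the detailed model it does write out (the $(\in ,\in \vee q_{k})$-fuzzy left/right ideal theorem) is precisely your contradiction argument for the forward direction and your case split on $\min \{t,r\}$ versus $\frac{1-k}{2}$ for the converse. Your closing remark about attaching the membership hypotheses to $x$ and $z$ rather than the middle factor is the right bookkeeping point and matches the paper's definition.
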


\begin{proof}
It is similar to the proof of theorem \ref{th main}.
\end{proof}

\begin{corollary}
Let $F$ be a fuzzy subset of $M$. Then $F$ is an $(\in ,\in \vee q_{k})$%
-fuzzy generalized bi-ideal of $M$ if and only if $F((xy)z)\geq \min
\{F(x),F\left( z\right) ,\frac{1-k}{2}\}$ for all $x,y,z\in M$ and $k\in
\lbrack 0,1)$.
\end{corollary}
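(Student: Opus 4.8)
The plan is to recognize that the defining condition of an $(\in ,\in \vee q_{k})$-fuzzy generalized bi-ideal is precisely condition $(ii)$ of the $(\in ,\in \vee q_{k})$-fuzzy bi-ideal: a generalized bi-ideal omits the subgroupoid (closure) requirement, so condition $(i)$ is simply dropped. Consequently the corollary is just the $(ii)$-part of the preceding bi-ideal theorem in isolation, and I would prove it by the same point-to-inequality transfer argument already used for Theorem \ref{th main} and for the $(\in ,\in \vee q_{k})$-fuzzy left ideal theorem above.

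For the forward direction I would argue by contradiction. Assuming $F$ is an $(\in ,\in \vee q_{k})$-fuzzy generalized bi-ideal, suppose there were $x,y,z\in M$ with $F((xy)z)<\min \{F(x),F(z),\frac{1-k}{2}\}$, and pick $t\in (0,1]$ strictly between these two values. Then $x_{t}\in F$ and $z_{t}\in F$, so by hypothesis $((xy)z)_{t}\in \vee q_{k}F$; but $F((xy)z)<t$ forces $((xy)z)_{t}\notin F$, while $F((xy)z)+t+k<\frac{1-k}{2}+\frac{1-k}{2}+k=1$ forces $\lnot (((xy)z)_{t}q_{k}F)$, so $((xy)z)_{t}\overline{\in \vee q_{k}}F$, a contradiction. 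Hence $F((xy)z)\geq \min \{F(x),F(z),\frac{1-k}{2}\}$ for all $x,y,z\in M$.

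For the converse I would assume the inequality and verify the fuzzy-point condition by a case split on the threshold. Given $x_{t}\in F$ and $z_{r}\in F$, set $s=\min \{t,r\}$; then $F((xy)z)\geq \min \{F(x),F(z),\frac{1-k}{2}\}\geq \min \{s,\frac{1-k}{2}\}$. If $s\leq \frac{1-k}{2}$ then $F((xy)z)\geq s$, so $((xy)z)_{s}\in F$; if $s>\frac{1-k}{2}$ then $F((xy)z)\geq \frac{1-k}{2}$, whence $F((xy)z)+s+k>\frac{1-k}{2}+\frac{1-k}{2}+k=1$ and $((xy)z)_{s}q_{k}F$. Either way $((xy)z)_{\min \{t,r\}}\in \vee q_{k}F$, as required.

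I do not expect any genuine obstacle here: the argument is a routine instance of the level-transfer technique, and the only point needing care is the arithmetic of the quasi-coincidence threshold, namely the identity $\frac{1-k}{2}+\frac{1-k}{2}+k=1$ that explains why $\frac{1-k}{2}$ is the correct cut-off. Since the statement is literally condition $(ii)$ of the bi-ideal theorem, one could alternatively simply cite that theorem, but I would prefer to spell out the short contradiction-and-cases proof so that the corollary remains self-contained.
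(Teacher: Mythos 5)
Your proof is correct and follows essentially the same route as the paper: the corollary is condition $(ii)$ of the preceding bi-ideal theorem in isolation, and the paper's (implicit) justification is exactly the level-transfer argument you spell out, identical to the contradiction-and-cases proof the paper gives explicitly for $(\in ,\in \vee q_{k})$-fuzzy left ideals. Your arithmetic with the threshold $\frac{1-k}{2}$ and the case split on $\min \{t,r\}$ match the paper's technique precisely.
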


\begin{definition}
A fuzzy subset $F$ of a right modular groupoid $M$ is called an $(\in ,\in
\vee q_{k})$-fuzzy interior ideal of $M$ if for all $x,,a,y\in M$ and $%
t,r\in (0,1]$ the following conditions hold

$\left( i\right) $ If $x_{t}\in F$ and $y_{r}\in M$ implies $\left(
xy\right) _{\min \left \{ t,r\right \} }\in \vee q_{k}F$,

$\left( ii\right) $ If $a_{t}\in M$ implies $\left( \left( xa\right)
y\right) _{\min \left \{ t,r\right \} }\in \vee q_{k}F$.
\end{definition}

\begin{theorem}
Let $F$ be a fuzzy subset of $M$. Then $F$ is an $(\in ,\in \vee q_{k})$%
-fuzzy interior ideal of $M$ if and only if

$\left( i\right) $ $F(xy)\geq \min \{F\left( x\right) ,F(y),\frac{1-k}{2}\}$
for all $x,y\in M$ and $k\in \lbrack 0,1)$,

$\left( ii\right) $ $F((xa)y)\geq \min \{F(a),\frac{1-k}{2}\}$ for all $%
x,a,y\in M$ and $k\in \lbrack 0,1)$.
\end{theorem}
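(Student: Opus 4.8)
The plan is to prove the biconditional by handling each implication with the standard quasi-coincidence argument already displayed in the proof of the theorem characterizing $(\in ,\in \vee q_{k})$-fuzzy left (right) ideals. I first observe that condition $(i)$, in both the point-based definition and the inequality form, is exactly the assertion that $F$ is an $(\in ,\in \vee q_{k})$-fuzzy subgroupoid of $M$; so by the earlier subgroupoid characterization the equivalence of the two versions of $(i)$ is free, and only condition $(ii)$ carries new content. Thus the work reduces to showing that the point condition ``$a_{t}\in F$ implies $((xa)y)_{t}\in \vee q_{k}F$'' is equivalent to ``$F((xa)y)\geq \min \{F(a),\frac{1-k}{2}\}$''.

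For the forward direction I assume $F$ is an $(\in ,\in \vee q_{k})$-fuzzy interior ideal in the point sense and argue $(ii)$ by contradiction. Suppose there exist $x,a,y\in M$ with $F((xa)y)<\min \{F(a),\frac{1-k}{2}\}$, and choose $t\in (0,1]$ with $F((xa)y)<t\leq \min \{F(a),\frac{1-k}{2}\}$. Then $F(a)\geq t$, so $a_{t}\in F$, and the definition forces $((xa)y)_{t}\in \vee q_{k}F$. However $F((xa)y)<t$ gives $((xa)y)_{t}\notin F$, while $F((xa)y)+t+k<\frac{1-k}{2}+\frac{1-k}{2}+k=1$ gives that $q_{k}$ fails, so $((xa)y)_{t}\overline{\in \vee q_{k}}F$, a contradiction. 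Hence $(ii)$ holds.

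For the converse I assume the two inequalities and recover the point conditions. Given $a_{t}\in F$, so $F(a)\geq t$, the inequality yields $F((xa)y)\geq \min \{t,\frac{1-k}{2}\}$. I then split on the size of $t$: if $t\leq \frac{1-k}{2}$ then $F((xa)y)\geq t$, so $((xa)y)_{t}\in F$; if $t>\frac{1-k}{2}$ then $F((xa)y)\geq \frac{1-k}{2}$, whence $F((xa)y)+t+k>\frac{1-k}{2}+\frac{1-k}{2}+k=1$ and $((xa)y)_{t}q_{k}F$. In either case $((xa)y)_{t}\in \vee q_{k}F$, which is condition $(ii)$ of the definition. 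The verification of the definition's condition $(i)$ is identical to the subgroupoid case and need not be repeated.

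I do not expect a genuine obstacle, since the argument mirrors the proofs already recorded in the excerpt; the only point needing care is the threshold bookkeeping with $\frac{1-k}{2}$ in the case split, arranged so that the strict inequality $F(\cdot)+t+k>1$ is produced precisely when membership $\in F$ fails, which is what makes the two-case analysis exhaustive.
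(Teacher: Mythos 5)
Your proposal is correct and follows essentially the same route the paper intends: the paper dismisses this proof as ``similar to Theorem \ref{th main},'' and the written-out model is the characterization of $(\in ,\in \vee q_{k})$-fuzzy left (right) ideals, whose contradiction argument (choosing $t$ strictly above $F((xa)y)$ but at most $\min \{F(a),\frac{1-k}{2}\}$) and two-case split on $t$ versus $\frac{1-k}{2}$ you reproduce faithfully for condition $(ii)$, while correctly delegating condition $(i)$ to the subgroupoid characterization. No gaps.
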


\begin{proof}
It is similar to the proof of theorem \ref{th main}.
\end{proof}

\begin{lemma}
The intersection of any family of $(\in ,\in \vee q_{k})$-fuzzy interior
ideals of right modular groupoid $M$ is an $(\in ,\in \vee q_{k})$-fuzzy
interior ideal of $M$.
\end{lemma}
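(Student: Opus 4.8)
The plan is to reduce the lemma to the pointwise inequality characterization of $(\in ,\in \vee q_{k})$-fuzzy interior ideals proved immediately above, and then to check that those two inequalities are preserved under taking an infimum. First I would let $\{F_{i}\}_{i\in I}$ be an arbitrary family of $(\in ,\in \vee q_{k})$-fuzzy interior ideals of $M$ and define their intersection $F$ pointwise by
\[
F(x)=\bigwedge_{i\in I}F_{i}(x)=\inf_{i\in I}F_{i}(x),\qquad x\in M .
\]
By the preceding characterization theorem, each $F_{i}$ satisfies $F_{i}(xy)\geq \min\{F_{i}(x),F_{i}(y),\tfrac{1-k}{2}\}$ for all $x,y\in M$ and $F_{i}((xa)y)\geq \min\{F_{i}(a),\tfrac{1-k}{2}\}$ for all $x,a,y\in M$. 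Since that same theorem is an \emph{if and only if}, it suffices for me to verify these two inequalities for $F$ itself.

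For condition (i) I would fix $x,y\in M$. For each index $i$ the definition of the infimum gives $F_{i}(x)\geq F(x)$ and $F_{i}(y)\geq F(y)$, so
\[
F_{i}(xy)\geq \min\{F_{i}(x),F_{i}(y),\tfrac{1-k}{2}\}\geq \min\{F(x),F(y),\tfrac{1-k}{2}\}.
\]
The right-hand side is a constant independent of $i$, hence a lower bound for the family $\{F_{i}(xy)\}_{i\in I}$; taking the infimum over $i$ then yields $F(xy)=\inf_{i}F_{i}(xy)\geq \min\{F(x),F(y),\tfrac{1-k}{2}\}$. For condition (ii) I would argue identically: fixing $x,a,y\in M$ and using $F_{i}(a)\geq F(a)$ gives $F_{i}((xa)y)\geq \min\{F_{i}(a),\tfrac{1-k}{2}\}\geq \min\{F(a),\tfrac{1-k}{2}\}$, and passing to the infimum produces $F((xa)y)\geq \min\{F(a),\tfrac{1-k}{2}\}$. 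Applying the characterization theorem in the reverse direction then concludes that $F$ is an $(\in ,\in \vee q_{k})$-fuzzy interior ideal of $M$.

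I do not expect a serious obstacle here, since the whole argument rests on the single routine observation about interchanging a uniform lower bound with an infimum. The only point genuinely worth care is exactly that interchange: it succeeds because each bound $\min\{F(x),F(y),\tfrac{1-k}{2}\}$ and $\min\{F(a),\tfrac{1-k}{2}\}$ does not depend on the index $i$, so it is a lower bound for the whole family and is therefore $\leq \inf_{i}F_{i}(\cdot)$. Notably, no such step would go through for a supremum, which is the structural reason the analogous claim for unions of interior ideals fails; I would flag this asymmetry so the reader sees why the proof uses the infimum and nothing stronger.
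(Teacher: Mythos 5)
Your proof is correct and follows essentially the same route as the paper: reduce the lemma to the pointwise inequality characterization of $(\in ,\in \vee q_{k})$-fuzzy interior ideals and pass the $i$-independent lower bound through the infimum. In fact your version is slightly more complete, since the paper's own proof verifies only the condition $F((xa)y)\geq \min \{F(a),\tfrac{1-k}{2}\}$ and silently omits the subgroupoid condition $F(xy)\geq \min \{F(x),F(y),\tfrac{1-k}{2}\}$, which you check explicitly by the same interchange argument.
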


\begin{proof}
Let $\left \{ F_{i}\right \} _{i\in I}$ be a family of $(\in ,\in \vee q_{k})$%
-fuzzy interior ideals of $M$ and $x,a,y\in M$. Then $(\wedge _{i\in
I}F_{i})(\left( xa)y\right) =\wedge _{i\in I}(F_{i}(\left( xa\right) y)$. As
each $F_{i}$ is an $(\in ,\in \vee q_{k})$-fuzzy interior ideal of $M$, so $%
F_{i}((xa)y)\geq F_{i}(a)\wedge \frac{1-k}{2}$ for all $i\in I$. Thus 
\begin{eqnarray*}
\left( \wedge _{i\in I}F_{i}\right) (\left( xa)y\right) &=&\wedge _{i\in
I}(F_{i}(\left( xa\right) y)\geq \wedge _{i\in I}\left( F_{i}(a)\wedge \frac{%
1-k}{2}\right) \\
&=&\left( \wedge _{i\in I}F_{i}(a)\right) \wedge \frac{1-k}{2}=\left( \wedge
_{i\in I}F_{i}\right) \left( a\right) \wedge \frac{1-k}{2}\text{.}
\end{eqnarray*}

Therefore $\wedge _{i\in I}F_{i}$ is an $(\in ,\in \vee q_{k})$-fuzzy
interior ideal of $M$.
\end{proof}

\begin{definition}
A fuzzy subset $F$ of a right modular groupoid $M$ is called an $(\in ,\in
\vee q_{k})$-fuzzy quasi-ideal of $M$ if following condition holds%
\begin{equation*}
F\left( x\right) \geq \min \left \{ (F\circ 1)\left( x\right) ,\left( 1\circ
f\right) (x),\frac{1-k}{2}\right \} \text{.}
\end{equation*}

where $1$ is the fuzzy subset of $M$ mapping every element of $M$ on $1$.
\end{definition}

\begin{lemma}
If $M$ is a completely regular right modular groupoid with left identity,
then a fuzzy subset $F$ is an $(\in ,\in \vee q_{k})$-fuzzy right ideal of $%
M $ if and only if $F$ is an $(\in ,\in \vee q_{k})$-fuzzy left ideal of $M$.
\end{lemma}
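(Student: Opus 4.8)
The plan is to reduce everything to the value-level characterizations already proved in the excerpt: $F$ is an $(\in,\in\vee q_{k})$-fuzzy left ideal iff $F(xy)\geq\min\{F(y),\frac{1-k}{2}\}$, and an $(\in,\in\vee q_{k})$-fuzzy right ideal iff $F(xy)\geq\min\{F(x),\frac{1-k}{2}\}$. Given these, the lemma becomes a purely computational statement, and I would prove the two implications symmetrically: in each direction I would expand the product $xy$ using a single regularity condition so that the available hypothesis applies to the correct factor, and then absorb an auxiliary squared term using the same hypothesis a second time.

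For the direction ``right ideal $\Rightarrow$ left ideal'', assume $F(ab)\geq\min\{F(a),\frac{1-k}{2}\}$ and fix $x,y\in M$. Using right regularity I would write $y=y^{2}w$ for some $w\in M$, so that $xy=x(y^{2}w)$. Applying the identity $(4)$ (valid because $M$ has a left identity) gives $x(y^{2}w)=y^{2}(xw)$, a product whose \emph{left} factor is $y^{2}$. The right-ideal hypothesis then yields $F(xy)=F(y^{2}(xw))\geq\min\{F(y^{2}),\frac{1-k}{2}\}$. Since $F(y^{2})=F(yy)\geq\min\{F(y),\frac{1-k}{2}\}$ (the right-ideal hypothesis with $a=b=y$), nesting the two bounds gives $F(xy)\geq\min\{F(y),\frac{1-k}{2}\}$, so $F$ is a left ideal.

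For the converse ``left ideal $\Rightarrow$ right ideal'', assume $F(ab)\geq\min\{F(b),\frac{1-k}{2}\}$ and fix $x,y\in M$. Again using right regularity I would write $x=x^{2}w$, so $xy=(x^{2}w)y$, and then apply the left invertive law $(1)$ to get $(x^{2}w)y=(yw)x^{2}$, a product whose \emph{right} factor is $x^{2}$. The left-ideal hypothesis gives $F(xy)\geq\min\{F(x^{2}),\frac{1-k}{2}\}$, and $F(x^{2})=F(xx)\geq\min\{F(x),\frac{1-k}{2}\}$ by the left-ideal hypothesis again, so $F(xy)\geq\min\{F(x),\frac{1-k}{2}\}$ and $F$ is a right ideal.

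The one genuine decision point, and the step I expect to require the most care, is pairing each regularity expansion with the structural law that deposits the squared term in the slot the hypothesis demands: $y^{2}$ on the left via $(4)$ in the first direction, and $x^{2}$ on the right via $(1)$ in the second. Once the product is in that form the conclusion is automatic from two applications of the $\min\{\,\cdot\,,\frac{1-k}{2}\}$ inequality. I would also note that only right regularity is actually used in each direction (the left identity entering solely through $(4)$), so the full strength of complete regularity is not needed for this particular equivalence.
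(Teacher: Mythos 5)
Your proof is correct, but it takes a noticeably different route from the paper's. The paper expands the \emph{left} factor of the product in both directions, writing $a=(a^{2}x)a^{2}$ (the completely regular form established in an earlier theorem) and applying the left invertive law $(1)$ to get $F(ab)=F(((a^{2}x)a^{2})b)=F((ba^{2})(a^{2}x))$; the one-sided ideal hypothesis is then applied twice, first to the outer product and then to the inner factor $ba^{2}$ (respectively to $a^{2}x=(xa)a$ in the converse). You instead expand whichever factor the target inequality refers to, using only right regularity ($y=y^{2}w$), and then use $(4)$ or $(1)$ to move the square into the slot that the hypothesis controls before absorbing it with a second application of the same inequality. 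Both are two-step absorption arguments and both are valid; yours has the mild advantages of invoking only the right-regularity part of complete regularity rather than the derived form $a=(a^{2}x)a^{2}$, and, in the ``left ideal $\Rightarrow$ right ideal'' direction, of dispensing with the left identity entirely since only $(1)$ is used there. Your closing observation about which hypotheses are actually needed is accurate and is a genuine (if minor) sharpening not recorded in the paper.
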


\begin{proof}
Let $F$ be an $(\in ,\in \vee q_{k})$-fuzzy right ideal of a completely
regular right modular groupoid $M$, then for each $a\in M$ there exist $x\in
M$ such that $a=(a^{2}x)a^{2}$, then by using $(1)$, we have%
\begin{eqnarray*}
F(ab) &=&F(((a^{2}x)a^{2})b)=F((ba^{2})(a^{2}x)) \\
&\geq &F(ba^{2})\wedge \frac{1-k}{2}\geq F(b)\wedge \frac{1-k}{2}\text{.}
\end{eqnarray*}

Conversely, assume that $F$ is an $(\in ,\in \vee q_{k})$-fuzzy right ideal
of $M$, then by using $\left( 1\right) $, we have%
\begin{eqnarray*}
F(ab) &=&F(((a^{2}x)a^{2})b)=F((ba^{2})(a^{2}x)) \\
&\geq &F(a^{2}x)\wedge \frac{1-k}{2}=F((aa)x)\wedge \frac{1-k}{2} \\
&=&F((xa)a)\wedge \frac{1-k}{2}\geq F(a)\wedge \frac{1-k}{2}\text{.}
\end{eqnarray*}
\end{proof}

\begin{theorem}
If $M$ is a completely regular right modular groupoid with left identity,
then a fuzzy subset $F$ is an $(\in ,\in \vee q_{k})$-fuzzy ideal of $M$ if
and only if $F$ is an $(\in ,\in \vee q_{k})$-fuzzy interior ideal of $M$.
\end{theorem}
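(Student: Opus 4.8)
The plan is to prove the two implications separately, observing that complete regularity is needed only for the converse.

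For the forward direction (every $(\in,\in\vee q_{k})$-fuzzy ideal is an $(\in,\in\vee q_{k})$-fuzzy interior ideal) I would argue purely from the defining inequalities, using the Corollary characterizing $(\in,\in\vee q_{k})$-fuzzy ideals together with the Theorem characterizing interior ideals. The subgroupoid condition $(i)$ is immediate, since $F(xy)\geq\min\{F(x),\frac{1-k}{2}\}\geq\min\{F(x),F(y),\frac{1-k}{2}\}$. For the interior condition $(ii)$ one applies the right-ideal inequality to the outer product and then the left-ideal inequality to the inner product, giving $F((xa)y)\geq\min\{F(xa),\frac{1-k}{2}\}\geq\min\{\min\{F(a),\frac{1-k}{2}\},\frac{1-k}{2}\}=\min\{F(a),\frac{1-k}{2}\}$. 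No regularity is invoked in this half.

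For the converse, suppose $F$ is an $(\in,\in\vee q_{k})$-fuzzy interior ideal. I would aim to establish the right-ideal inequality $F(ab)\geq\min\{F(a),\frac{1-k}{2}\}$ for all $a,b\in M$, and then deduce the left-ideal inequality for free by invoking the preceding Lemma (in a completely regular right modular groupoid with left identity, $F$ is an $(\in,\in\vee q_{k})$-fuzzy right ideal iff it is an $(\in,\in\vee q_{k})$-fuzzy left ideal); together these give that $F$ is a two-sided $(\in,\in\vee q_{k})$-fuzzy ideal. The key computation rewrites $ab$ into the shape to which interior condition $(ii)$ applies. Using the earlier characterization $a=(a^{2}x)a^{2}$ of complete regularity, the left invertive law $(1)$, and identity $(4)$, I would carry out the chain
\begin{equation*}
ab=((a^{2}x)a^{2})b=(ba^{2})(a^{2}x)=a^{2}((ba^{2})x)=(((ba^{2})x)a)a .
\end{equation*}
Setting $c=(ba^{2})x$, this reads $ab=(ca)a$, and interior condition $(ii)$ with middle factor $a$ yields $F(ab)=F((ca)a)\geq\min\{F(a),\frac{1-k}{2}\}$, as desired.

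The main obstacle is locating the correct sequence of identities in this rewriting: the interior axiom only controls $F$ on products of the form $((pq)r)$ through the middle entry $q$, so the whole point is to transport the distinguished element $a$ into that middle slot. The substitution $a=(a^{2}x)a^{2}$ supplies the extra copies of $a$, and the two applications of $(1)$ (with $(4)$ in between to collapse the second factor onto $a^{2}$) are what move an $a$ into the interior position. Once the right-ideal inequality is in hand the rest is purely formal, relying on the left-right symmetry Lemma rather than on any further manipulation.
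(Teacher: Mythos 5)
Your proof is correct, and both halves check out: the chain $ab=((a^{2}x)a^{2})b=(ba^{2})(a^{2}x)=a^{2}((ba^{2})x)=(((ba^{2})x)a)a$ is a valid application of $(1)$, then $(4)$, then $(1)$ again, and the easy direction (ideal implies interior ideal) is exactly the routine verification the paper dismisses with ``the converse is obvious.'' The paper's treatment of the substantive direction starts from the same substitution $a=(a^{2}x)a^{2}$ but is shorter at two points. First, it applies the interior-ideal axiom to $((a^{2}x)a^{2})b$ as it stands, with $a^{2}$ already occupying the middle slot, obtaining $F(ab)\geq F(aa)\wedge \frac{1-k}{2}\geq F(a)\wedge \frac{1-k}{2}$ via the subgroupoid condition $(i)$; your further rewriting to $(ca)a$ places $a$ itself in the middle slot and so avoids invoking $(i)$, at the cost of two extra identity applications. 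Second, for the left-ideal inequality the paper performs a second direct computation, substituting $b=(b^{2}y)b^{2}$ and reducing $F(ab)$ to $F(((yb)b)(ab^{2}))\geq F(b)\wedge \frac{1-k}{2}$, whereas you delegate that half to the preceding right--left symmetry Lemma. Your delegation is legitimate, since that Lemma is stated for arbitrary fuzzy subsets under exactly the hypotheses in force, and it is arguably cleaner because it reuses work already done; the paper's version is self-contained and exhibits the left-ideal estimate explicitly. Either way the theorem is established.
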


\begin{proof}
Let $F$ be an $(\in ,\in \vee q_{k})$-fuzzy interior ideal of a completely
regular right modular groupoid $M$, then for each $a\in M$ there exist $x\in
M$ such that $a=(a^{2}x)a^{2}$, then by using $\left( 4\right) $ and $\left(
1\right) $, we have%
\begin{equation*}
F(ab)=F(((a^{2}x)a^{2})b)\geq F(aa)\geq F(a)\wedge F(a)\wedge \frac{1-k}{2}%
\text{, and}
\end{equation*}%
\begin{eqnarray*}
F(ab) &=&F(a((b^{2}y)b^{2}))F((b^{2}y)(ab^{2}))=F(((bb)y)(ab^{2})) \\
&=&F(((yb)b)(ab^{2}))\geq F(b)\wedge \frac{1-k}{2}\text{.}
\end{eqnarray*}

The converse is obvious.
\end{proof}

\begin{theorem}
If $M$ is a completely regular right modular groupoid with left identity,
then a fuzzy subset $F$ is an $(\in ,\in \vee q_{k})$-fuzzy generalized
bi-ideal of $M$ if and only if $F$ is an $(\in ,\in \vee q_{k})$-fuzzy
bi-ideal of $M$.
\end{theorem}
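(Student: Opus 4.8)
The plan is to treat the two implications separately. One direction is immediate: the defining inequality of an $(\in ,\in \vee q_{k})$-fuzzy bi-ideal already contains $F((xy)z)\geq \min \{F(x),F(z),\frac{1-k}{2}\}$, which is exactly the characterization of an $(\in ,\in \vee q_{k})$-fuzzy generalized bi-ideal in the preceding corollary. Hence every $(\in ,\in \vee q_{k})$-fuzzy bi-ideal is automatically an $(\in ,\in \vee q_{k})$-fuzzy generalized bi-ideal, and this half needs neither complete regularity nor the left identity.

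For the substantive direction, suppose $F$ is an $(\in ,\in \vee q_{k})$-fuzzy generalized bi-ideal, so $F((xy)z)\geq \min \{F(x),F(z),\frac{1-k}{2}\}$ for all $x,y,z\in M$. Condition $(ii)$ of the bi-ideal characterization coincides with this, so it remains only to verify the subgroupoid condition $(i)$, namely $F(ab)\geq \min \{F(a),F(b),\frac{1-k}{2}\}$ for all $a,b\in M$. This is where I would invoke complete regularity.

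First I would use the earlier characterization of complete regularity (valid with a left identity), which supplies for each $a\in M$ an element $p\in M$ with $a=(a^{2}p)a^{2}$. Substituting this for the left factor of $ab$ and applying the left invertive law $(1)$ gives $ab=(ba^{2})(a^{2}p)$. I would then rewrite the two factors by means of $(4)$ and $(1)$ as $ba^{2}=a(ba)$ and $a^{2}p=(pa)a$, and apply the medial law $(2)$ to reach
\[
ab=(a(pa))((ba)a).
\]
This expression is of the form $(xy)z$ with outer-left element $a$ and right element $(ba)a$, so the generalized bi-ideal property yields $F(ab)\geq \min \{F(a),F((ba)a),\frac{1-k}{2}\}$. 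A second application of the same property, now to the triple product $(ba)a$, gives $F((ba)a)\geq \min \{F(b),F(a),\frac{1-k}{2}\}$, and combining the two inequalities collapses the bound to $\min \{F(a),F(b),\frac{1-k}{2}\}$, which is precisely condition $(i)$.

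The hard part is not any individual identity but engineering the chain of rewrites so that, once everything is reassociated, the two protected outer slots of the triple product are occupied by elements whose $F$-values I can control, namely $a$ itself and the expression $(ba)a$ (which is again of generalized-bi-ideal shape). The naive first attempts—substituting plain regularity $a=(ax)a$ and reassociating—tend to reproduce $ab$ in one of the protected slots and deliver only the vacuous estimate $F(ab)\geq F(ab)$; the point of using the complete-regularity form $a=(a^{2}p)a^{2}$ is that it provides enough occurrences of $a$ for the medial law to deposit an isolated $a$ in the left slot while leaving $b$ recoverable from the right slot. Once the displayed factorization is secured, the remainder is a short minimum-chasing argument, and the converse inclusion noted in the first paragraph completes the equivalence.
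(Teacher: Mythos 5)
Your proposal is correct: every identity you invoke checks out. From $a=(a^{2}p)a^{2}$ you get $ab=((a^{2}p)a^{2})b=(ba^{2})(a^{2}p)$ by $(1)$, then $ba^{2}=a(ba)$ by $(4)$ and $a^{2}p=(pa)a$ by $(1)$, and the medial law gives $ab=(a(pa))((ba)a)$; two applications of the generalized bi-ideal inequality then yield $F(ab)\geq F(a)\wedge F((ba)a)\wedge \frac{1-k}{2}\geq F(a)\wedge F(b)\wedge \frac{1-k}{2}$, and the converse direction is, as you say, immediate from the definitions. The overall strategy is the same as the paper's (substitute the complete-regularity form of $a$ and reassociate until the generalized bi-ideal condition applies), but the factorization is different and slightly less economical: the paper rewrites $a=(a^{2}x)a^{2}=(a^{2}x)(aa)=a((a^{2}x)a)$ using only law $(4)$, so that $ab=\bigl(a((a^{2}x)a)\bigr)b$ already has $a$ and $b$ sitting in the two protected slots, and a \emph{single} application of $F((xy)z)\geq F(x)\wedge F(z)\wedge \frac{1-k}{2}$ finishes the argument. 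Your route through $(1)$, $(4)$ and the medial law instead deposits $(ba)a$ in the right slot and recovers $F(b)$ by a second application of the generalized bi-ideal property. Your diagnosis of why naive substitutions fail (they return $ab$ or an uncontrollable expression to a protected slot) is accurate, but note that the extra occurrences of $a$ in $(a^{2}x)a^{2}$ already suffice to isolate $a$ on the left \emph{and} keep $b$ untouched on the right, so the medial-law shuffle and the second estimate can be dispensed with.
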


\begin{proof}
Let $F$ be an $(\in ,\in \vee q_{k})$-fuzzy generalized bi-ideal of a
completely regular right modular groupoid $M$, then for each $a\in M$ there
exist $x\in M$ such that $a=(a^{2}x)a^{2}$, then by using $(4)$, we have%
\begin{eqnarray*}
F(ab) &=&F(((a^{2}x)a^{2})b)=F(((a^{2}x)(aa))b) \\
&=&F((a((a^{2}x)a))b)\geq F(a)\wedge F(b)\wedge \frac{1-k}{2}\text{.}
\end{eqnarray*}

The converse is obvious.
\end{proof}

\begin{theorem}
If $M$ is a completely regular right modular groupoid with left identity,
then a fuzzy subset $F$ is an $(\in ,\in \vee q_{k})$-fuzzy bi-ideal of $M$
if and only if $\ F$ is an $(\in ,\in \vee q_{k})$-fuzzy two sided ideal of $%
M$.
\end{theorem}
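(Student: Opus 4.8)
The plan is to treat the two implications separately, with essentially all of the content lying in the forward direction.

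\textbf{The easy implication} (two-sided ideal $\Rightarrow$ bi-ideal) needs no regularity. If $F$ is an $(\in,\in\vee q_{k})$-fuzzy two-sided ideal then in particular it is a right ideal, so $F(xy)\geq\min\{F(x),\frac{1-k}{2}\}$. This already forces both bi-ideal inequalities: condition $(i)$ follows from $F(xy)\geq\min\{F(x),\frac{1-k}{2}\}\geq\min\{F(x),F(y),\frac{1-k}{2}\}$, and condition $(ii)$ follows by applying the right-ideal inequality twice, $F((xy)z)\geq\min\{F(xy),\frac{1-k}{2}\}\geq\min\{F(x),\frac{1-k}{2}\}\geq\min\{F(x),F(z),\frac{1-k}{2}\}$. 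This is the ``converse is obvious'' step.

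\textbf{The forward implication.} Assume $F$ is an $(\in,\in\vee q_{k})$-fuzzy bi-ideal. By the preceding Lemma the right- and left-ideal conditions are equivalent in a completely regular $M$ with left identity, so it suffices to establish one of them, say $F(ab)\geq\min\{F(a),\frac{1-k}{2}\}$. First I would record the auxiliary equality $\min\{F(a^{2}),\frac{1-k}{2}\}=\min\{F(a),\frac{1-k}{2}\}$: the direction $\geq$ is bi-ideal condition $(i)$ applied to $aa$, while $\leq$ follows from $a=(a^{2}x)a^{2}$ (complete regularity) and bi-ideal condition $(ii)$ applied to the bracketing $(a^{2}x)a^{2}$, giving $F(a)\geq\min\{F(a^{2}),F(a^{2}),\frac{1-k}{2}\}$. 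This lets me trade $a$ for $a^{2}$ freely inside any $\min\{\,\cdot\,,\frac{1-k}{2}\}$.

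\textbf{The core computation.} The heart of the matter is to rewrite $ab$, using $a=(a^{2}x)a^{2}$ together with the left invertive law $(1)$, the medial law $(2)$, the paramedial law $(3)$, identity $(4)$ and the left identity, into a left-associated shape $(PQ)R$ in which the \emph{foreign} factor $b$ is confined to the central block $Q$ while $P$ and $R$ are powers of $a$; bi-ideal condition $(ii)$ together with the auxiliary equality would then give $F(ab)\geq\min\{F(a^{2}),F(a^{2}),\frac{1-k}{2}\}=\min\{F(a),\frac{1-k}{2}\}$. Concretely I would aim to realize $ab\in(a^{2}M)a^{2}$, since $F$ evaluated on $(a^{2}t)a^{2}$ is bounded below by $\min\{F(a^{2}),\frac{1-k}{2}\}$. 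Once the right-ideal inequality is in hand, the preceding Lemma promotes $F$ to a two-sided ideal, closing the equivalence.

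\textbf{Main obstacle.} The delicate point is exactly this re-bracketing. Starting from $ab=((a^{2}x)a^{2})b$ and applying $(1)$ gives $(ba^{2})(a^{2}x)$, and every subsequent application of $(1)$--$(4)$ appears to leave either $b$ or the regularity witness $x$ exposed in a corner of the left-associated product, which is fatal because bi-ideal condition $(ii)$ reads off precisely the two outer factors. By contrast, the bracketings that do place two $a$-powers at the corners, for instance $a((by)a)$ obtained from the right-regular form $a=a^{2}y$ via $(1)$ and $(4)$, are right-associated and hence lie outside the scope of condition $(ii)$; rewriting them into $(PQ)R$-form re-exposes $b$. Converting such a right-associated expression into left-associated form without re-surfacing the foreign factor, that is, genuinely proving $ab\in(a^{2}M)a^{2}$, is where I expect the real work to concentrate, and it is the step that must use complete regularity most heavily, presumably by invoking regularity a second time on the outer occurrence of $a$.
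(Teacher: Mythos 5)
Your framework is sound --- the easy direction, the reduction to a one-sided condition via the preceding Lemma, and the observation that $\min \{F(a^{2}),\frac{1-k}{2}\}=\min \{F(a),\frac{1-k}{2}\}$ are all correct --- but there is a genuine gap exactly where you flag one: the core rewriting of $ab$ is never carried out, and your diagnosis of why it should be hard points in the wrong direction. The paper does not aim for $ab\in (a^{2}M)a^{2}$ and does not invoke regularity a second time; using only $(1)$ and $(4)$ it shows
\begin{equation*}
ab=((a^{2}x)a^{2})b=(b(aa))((aa)x)=(aa)((b(aa))x)=(((b(aa))x)a)a=((ab)(a(xa)))a=(a((ab)(xa)))a,
\end{equation*}
so that $ab$ lands in $(aM)a$ with \emph{both} corner factors equal to $a$ itself, and bi-ideal condition $(ii)$ gives $F(ab)\geq F(a)\wedge F(a)\wedge \frac{1-k}{2}$ at once, with no need for the auxiliary equality. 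Your claim that ``every subsequent application of $(1)$--$(4)$ appears to leave either $b$ or the regularity witness $x$ exposed in a corner'' is therefore false: the decisive moves are the left invertive law read backwards, $(aa)u=(ua)a$, which buries an arbitrary $u$ between two copies of $a$, followed by $(1)$ and $(4)$ again to rotate $a$ into the first slot of the inner product. Since this computation is the entire content of the theorem, the proposal as written is a plan rather than a proof.

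A smaller point: the paper does not actually route the left-ideal inequality through the Lemma but proves it separately, writing $b=(b^{2}y)b^{2}$ and reducing $ab$ to $((bb)((ye)a))(bb)\in (b^{2}M)b^{2}$, after which condition $(i)$ passes from $F(b^{2})$ to $F(b)$ --- this is the one place where (half of) your auxiliary equality genuinely earns its keep. Appealing to the Lemma instead is legitimate, but it only relocates the work: one explicit re-bracketing of the above kind is unavoidable in either route, and it is the step missing from your argument.
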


\begin{proof}
Let $F$ be an $(\in ,\in \vee q_{k})$-fuzzy bi-ideal of a completely regular
right modular groupoid $M$, then for each $a\in M$ there exist $x\in M$ such
that $a=(a^{2}x)a^{2}$, then by using $(1)$ and $\left( 4\right) $, we have%
\begin{eqnarray*}
F(ab) &=&F(((a^{2}x)a^{2})b)=F((((aa)x)a^{2})b)=F((((xa)a)a^{2})b) \\
&=&F((ba^{2})((xa)a))=F((b(aa))((aa)x))=F((a(ba))((aa)x)) \\
&=&F((aa((a(ba))x))=F((((a(ba))x)a)a)=F((((b(aa))x)a)a) \\
&=&F((((x(aa))b)a)a)=F((((a(xa))b)a)a)=F(((ab)(a(xa)))a) \\
&=&F((a((ab)(xa)))a)\geq F(a)\wedge F(a)\wedge \frac{1-k}{2}=F(a)\wedge 
\frac{1-k}{2}\text{.}
\end{eqnarray*}

And, by using $\left( 4\right) ,\left( 1\right) $ and $\left( 3\right) $,we
have%
\begin{eqnarray*}
F(ab) &=&F(a((b^{2}y)b^{2}))=F((b^{2}y)(ab^{2}))=F(((bb)y)(a(bb))) \\
&=&F(((a(bb)y)(bb))=F(((a(bb))(ey))(bb))=F(((ye)((bb)a))(bb)) \\
&=&F((bb)((ye)a))(bb))\geq F(bb)\wedge \frac{1-k}{2}\text{.}\geq F(b)\wedge
F(b)\wedge \frac{1-k}{2}\text{.} \\
&=&F(b)\wedge \frac{1-k}{2}\text{.}
\end{eqnarray*}

The converse is obvious.
\end{proof}

\begin{theorem}
If $M$ is a completely regular right modular groupoid with left identity,
then a fuzzy subset $F$ is an $(\in ,\in \vee q_{k})$-fuzzy quasi-ideal of $%
M $ if and only if $\ F$ is an $(\in ,\in \vee q_{k})$-fuzzy two sided ideal
of $M$.
\end{theorem}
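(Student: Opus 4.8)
The plan is to prove the two implications separately, throughout reading the fuzzy product in the usual way, $(F\circ G)(x)=\bigvee_{x=pq}\min\{F(p),G(q)\}$ (and $0$ when $x$ admits no factorization), so that in particular $(F\circ 1)(x)=\bigvee_{x=pq}F(p)$ and $(1\circ F)(x)=\bigvee_{x=pq}F(q)$.

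For the direction two-sided ideal $\Rightarrow$ quasi-ideal I would use only the right ideal inequality $F(pq)\geq\min\{F(p),\frac{1-k}{2}\}$. For every factorization $x=pq$ this reads $F(x)\geq\min\{F(p),\frac{1-k}{2}\}$; taking the supremum over all such $p$ and using that $\min\{\cdot,\frac{1-k}{2}\}$ commutes with $\bigvee$ gives $F(x)\geq\min\{(F\circ 1)(x),\frac{1-k}{2}\}$. Since appending a further argument to a minimum can only decrease it, this already yields $F(x)\geq\min\{(F\circ 1)(x),(1\circ F)(x),\frac{1-k}{2}\}$, so $F$ is an $(\in,\in\vee q_k)$-fuzzy quasi-ideal. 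This is the routine converse alluded to in the preceding theorems.

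The substantive direction is quasi-ideal $\Rightarrow$ two-sided ideal. My plan is to show first that a quasi-ideal is an $(\in,\in\vee q_k)$-fuzzy right ideal and then to invoke the earlier Lemma, according to which right and left ideals coincide in a completely regular right modular groupoid with left identity, to conclude that $F$ is two-sided. Fix $a,b\in M$. By complete regularity choose $x\in M$ with $a=(a^2x)a^2$; substituting this for the first occurrence of $a$ in $ab$ and manipulating with $(1)$ and $(4)$ exactly as in the proof of the bi-ideal theorem, one rewrites $ab=(a((ab)(xa)))a$. This displays $ab$ as a product whose right factor is $a$, while the trivial factorization $ab=a\cdot b$ displays it as a product whose left factor is $a$. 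Hence $(F\circ 1)(ab)\geq F(a)$ and $(1\circ F)(ab)\geq F(a)$ simultaneously, and the quasi-ideal inequality gives $F(ab)\geq\min\{F(a),F(a),\frac{1-k}{2}\}=\min\{F(a),\frac{1-k}{2}\}$. Thus $F$ is an $(\in,\in\vee q_k)$-fuzzy right ideal, and the Lemma promotes it to an $(\in,\in\vee q_k)$-fuzzy two-sided ideal.

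The step I expect to be the crux is producing the factorization of $ab$ whose right factor is exactly $a$: since no ideal property of $F$ is yet available, the only data one may legitimately feed into the quasi-ideal inequality are the raw values $F(a)$, $F(b)$ and the algebra of $M$, so the whole argument rests on the identity $ab=(a((ab)(xa)))a$ furnished by complete regularity together with the left invertive law and $(4)$. Once this single rewriting is secured, both fuzzy products are bounded below by $F(a)$ at once and the estimate closes; the easy converse and the appeal to the Lemma are then immediate.
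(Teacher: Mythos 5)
Your proposal is correct, and for the substantive direction it rests on the same central device as the paper: use complete regularity to produce a factorization of $ab$ whose \emph{right} factor is $a$, combine it with the trivial factorization $ab=a\cdot b$ (left factor $a$), and feed both into the quasi-ideal inequality to conclude $F(ab)\geq \min\{F(a),\tfrac{1-k}{2}\}$. Your rewriting $ab=(a((ab)(xa)))a$ is the one appearing in the bi-ideal theorem and is a valid consequence of $(1)$ and $(4)$; the paper's quasi-ideal proof uses the slightly different $ab=((a^{2}b)(xa))a$, but the two play identical roles. Where you genuinely diverge is the left-ideal half: the paper carries out a second explicit computation, producing $ab=b(((ab^{2})y)b)$ so that $b$ appears as a left factor and the trivial factorization supplies $b$ as a right factor, whereas you stop after the right-ideal property and invoke the earlier Lemma that $(\in,\in\vee q_{k})$-fuzzy right and left ideals coincide in a completely regular right modular groupoid with left identity. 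That appeal is legitimate (the Lemma is stated under exactly these hypotheses) and cuts the computation in half; the paper's route is merely more self-contained. You also spell out the easy converse, which the paper dismisses as obvious, and your argument there --- that $\min\{\cdot,\tfrac{1-k}{2}\}$ commutes with the supremum over factorizations, so the right-ideal inequality alone already dominates the three-term minimum --- is sound, including the degenerate case where $x$ admits no factorization.
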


\begin{proof}
Let $F$ be an $(\in ,\in \vee q_{k})$-fuzzy quasi-ideal of a completely
regular right modular groupoid $M$, then for each $a\in M$ there exist $x\in
M$ such that $a=(a^{2}x)a^{2}$, then by using $(1)$, $\left( 3\right) $ and $%
\left( 4\right) $, we have%
\begin{eqnarray*}
ab &=&((a^{2}x)a^{2})b=(ba^{2})\left( a^{2}x\right) =\left( xa^{2}\right)
\left( a^{2}b\right) \\
&=&\left( x\left( aa\right) \right) \left( a^{2}b\right) =\left( a\left(
xa\right) \right) \left( a^{2}b\right) =\left( \left( a^{2}b\right) \left(
xa\right) \right) a\text{.}
\end{eqnarray*}

Then%
\begin{eqnarray*}
F(ab) &\geq &(F\circ 1)\left( ab\right) \wedge \left( 1\circ F\right) \left(
ab\right) \wedge \frac{1-k}{2} \\
&=&\dbigvee \limits_{ab=pq}\left \{ F(p)\wedge 1\left( q\right) \right \}
\wedge (1\circ F)(ab)\wedge \frac{1-k}{2} \\
&\geq &F\left( a\right) \wedge 1\left( b\right) \wedge
\dbigvee \limits_{ab=lm}\left \{ F\left( l\right) \wedge 1(m)\right \} \wedge 
\frac{1-k}{2} \\
&=&F\left( a\right) \wedge \dbigvee \limits_{ab=\left( \left( a^{2}b\right)
\left( xa\right) \right) a}\left \{ 1\left( \left( a^{2}b\right) \left(
xa\right) \right) \wedge F\left( a\right) \right \} \\
&\geq &F\left( a\right) \wedge 1\left( \left( a^{2}b\right) \left( xa\right)
\right) \wedge F\left( a\right) \wedge \frac{1-k}{2}=F\left( a\right) \wedge 
\frac{1-k}{2}\text{.}
\end{eqnarray*}

Also by using $\left( 4\right) $ and $\left( 1\right) $, we have%
\begin{eqnarray*}
ab &=&a\left( (b^{2}y\right) b^{2})=\left( b^{2}y\right) \left(
ab^{2}\right) =\left( \left( bb\right) y\right) \left( ab^{2}\right) \\
&=&\left( \left( ab^{2}\right) y\right) \left( bb\right) =b\left( (\left(
ab^{2}\right) y\right) b)\text{.}
\end{eqnarray*}

Then%
\begin{eqnarray*}
F(ab) &\geq &\left( F\circ 1\right) \left( ab\right) \wedge \left( 1\circ
F\right) \left( ab\right) \wedge \frac{1-k}{2} \\
&=&\dbigvee \limits_{ab=pq}\left \{ F(p)\wedge 1\left( q\right) \right \}
\wedge \dbigvee \limits_{ab=lm}\left \{ 1(l)\wedge F\left( m\right) \right \}
\wedge \frac{1-k}{2} \\
&=&\dbigvee \limits_{ab=b\left( (\left( ab^{2}\right) y\right) b)}\left \{
F(b)\wedge 1(\left( \left( ab^{2}\right) y\right) b)\right \} \wedge
\dbigvee \limits_{ab=ab}\left \{ 1(a)\wedge F(b)\right \} \wedge \frac{1-k}{2}
\\
&\geq &F\left( b\right) \wedge 1(\left( \left( ab^{2}\right) y\right)
b)\wedge 1\left( a\right) \wedge F\left( b\right) \wedge \frac{1-k}{2}%
=F\left( b\right) \wedge \frac{1-k}{2}\text{.}
\end{eqnarray*}

The converse is obvious
\end{proof}

\begin{remark}
We note that in a completely regular right modular groupoid $M$ with left
identity, $(\in ,\in \vee q_{k})$-fuzzy left ideal $(\in ,\in \vee q_{k})$%
-fuzzy right ideal, $(\in ,\in \vee q_{k})$ fuzzy ideal,$(\in ,\in \vee
q_{k})$-fuzzy interior ideal, $(\in ,\in \vee q_{k})$-fuzzy bi-ideal, $(\in
,\in \vee q_{k})$-fuzzy generalized bi-ideal and $(\in ,\in \vee q_{k})$%
-fuzzy quasi-ideal coincide with each other.
\end{remark}

\begin{theorem}
If $M$ is a completely regular right modular groupoid then $F\wedge
_{k}G=F\circ _{k}G$ for every $(\in ,\in \vee q_{k})$- fuzzy ideal $F$ and $%
G $ of $M$.
\end{theorem}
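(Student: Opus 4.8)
The plan is to prove the asserted identity as two pointwise inequalities, reading the operations in the usual way for the $q_k$-setting: $(F\wedge_k G)(x)=\min\{F(x),G(x),\frac{1-k}{2}\}$ and $(F\circ_k G)(x)=\bigvee_{x=pq}\min\{F(p),G(q),\frac{1-k}{2}\}$, with the join taken to be $0$ when $x$ has no factorization. Throughout I will only use that a two-sided $(\in,\in\vee q_k)$-fuzzy ideal is at once a left and a right ideal, i.e. $F(pq)\ge\min\{F(p),\frac{1-k}{2}\}$ and $G(pq)\ge\min\{G(q),\frac{1-k}{2}\}$, which is exactly the content of the corollary characterizing $(\in,\in\vee q_k)$-fuzzy ideals earlier in the paper.

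First I would establish $F\circ_k G\le F\wedge_k G$, which needs no regularity. Fix $x$ and an arbitrary factorization $x=pq$. Since $F$ is a right ideal, $F(x)=F(pq)\ge\min\{F(p),\frac{1-k}{2}\}$, and since $G$ is a left ideal, $G(x)=G(pq)\ge\min\{G(q),\frac{1-k}{2}\}$. Hence each of $F(x)$, $G(x)$, and $\frac{1-k}{2}$ dominates $\min\{F(p),G(q),\frac{1-k}{2}\}$, so that single summand is $\le\min\{F(x),G(x),\frac{1-k}{2}\}=(F\wedge_k G)(x)$. Taking the join over all factorizations yields $(F\circ_k G)(x)\le(F\wedge_k G)(x)$.

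The reverse inclusion $F\wedge_k G\le F\circ_k G$ is where the structure of $M$ genuinely enters, and it is the step I expect to carry the real content. Fix $a$; since a completely regular $M$ is in particular regular, choose $u\in M$ with $a=(au)a$. This is a factorization $a=pq$ with $p=au$ and $q=a$, so $(F\circ_k G)(a)\ge\min\{F(au),G(a),\frac{1-k}{2}\}$. Because $F$ is a right ideal, $F(au)\ge\min\{F(a),\frac{1-k}{2}\}$, and therefore $\min\{F(au),G(a),\frac{1-k}{2}\}\ge\min\{F(a),G(a),\frac{1-k}{2}\}=(F\wedge_k G)(a)$. Combining the two inequalities gives the equality $F\circ_k G=F\wedge_k G$.

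The only delicate point is producing the correct factorization of $a$ in the second step; the regular decomposition $a=(au)a$ settles it at once. If one prefers to remain inside the hypothesis of complete regularity, the earlier characterization $a\in(a^2M)a^2$ serves equally well: writing $a=(a^2u)a^2$ and using $F(a^2u)\ge\min\{F(a^2),\frac{1-k}{2}\}\ge\min\{F(a),\frac{1-k}{2}\}$ together with $G(a^2)\ge\min\{G(a),\frac{1-k}{2}\}$ reproduces the same lower bound $(F\wedge_k G)(a)$. Beyond this, the argument is purely formal, resting on the distributivity of $\wedge\frac{1-k}{2}$ over the join, which lets the constant threshold pass freely through the supremum defining $\circ_k$.
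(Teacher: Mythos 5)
Your proof is correct and takes essentially the same route as the paper's: one inequality follows from the right/left ideal characterizations $F(pq)\geq \min\{F(p),\frac{1-k}{2}\}$ and $G(pq)\geq \min\{G(q),\frac{1-k}{2}\}$ applied to an arbitrary factorization, and the reverse inequality from exhibiting one specific factorization coming from regularity. The only difference is that you use the plain regular decomposition $a=(au)a$ where the paper uses $a=(a^{2}x)a^{2}$; your choice is marginally cleaner, since it needs only regularity rather than the characterization $a\in (a^{2}M)a^{2}$, which the paper only establishes under a left-identity hypothesis not stated in this theorem.
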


\begin{proof}
Let $F$ is an $(\in ,\in \vee q_{k})$- fuzzy right ideal of $M$ and $G$ is
an $(\in ,\in \vee q_{k})$-fuzzy left ideal of $M$, and $M$ is a completely
regular then for each $a\in M$ there exist $x\in M$ such that $%
a=(a^{2}x)a^{2}$, so we have%
\begin{eqnarray*}
(F\circ _{k}G)\left( a\right) &=&(F\circ G)\left( a\right) \wedge \frac{1-k}{%
2}=\dbigvee \limits_{a=pq}\left \{ F(p)\wedge G\left( q\right) \right \} \wedge 
\frac{1-k}{2} \\
&\geq &F(a^{2}x)\wedge G\left( a^{2}\right) \wedge \frac{1-k}{2}\geq
F(aa)\wedge G\left( aa\right) \wedge \frac{1-k}{2} \\
&\geq &F(aa)\wedge G\left( aa\right) \wedge \frac{1-k}{2}\geq F(a)\wedge
G\left( a\right) \wedge \frac{1-k}{2} \\
&=&\left( F\wedge G\right) (a)\wedge \frac{1-k}{2}=\left( F\wedge
_{k}G\right) (a)\text{.}
\end{eqnarray*}

Therefore $F\wedge _{k}G\leq F\circ _{k}G$, again%
\begin{eqnarray*}
(F\circ _{k}G)\left( a\right) &=&(F\circ G)\left( a\right) \wedge \frac{1-k}{%
2}=\left( \dbigvee \limits_{a=pq}\left \{ F(p)\wedge G\left( q\right) \right \}
\right) \wedge \frac{1-k}{2} \\
&=&\dbigvee \limits_{a=pq}\left \{ F(p)\wedge G\left( q\right) \wedge \frac{1-k%
}{2}\right \} \leq \dbigvee \limits_{a=pq}\left \{ \left( F(pq\wedge G\left(
pq\right) \right) \wedge \frac{1-k}{2}\right \} \\
&=&F(a)\wedge G\left( a\right) \wedge \frac{1-k}{2}=(F\wedge _{k}G)\left(
a\right) \text{.}
\end{eqnarray*}

Therefore $F\wedge _{k}G\geq F\circ _{k}G$. Thus $F\wedge _{k}G=F\circ _{k}G$%
.
\end{proof}

\begin{definition}
A right modular groupoid $M$ is called weakly regular if for each $a$ in $M$
there exists $x$ and $y$ in $M$ such that $a=(ax)(ay)$.
\end{definition}

It is easy to see that right regular, left regular and weakly regular
coincide in a right modular groupoid with left identity.

\begin{theorem}
For a weakly regular right modular groupoid $M$ with left identity, $%
(G\wedge _{k}F)\wedge _{k}H)\leq ((G\circ _{k}F)\circ _{k}H)$, where $G$ is
an $(\in ,\in \vee q_{k})$- fuzzy right ideal, $F$ is an $(\in ,\in \vee
q_{k})$- fuzzy interior ideal and $H$ is an $(\in ,\in \vee q_{k})$- fuzzy
left ideal.
\end{theorem}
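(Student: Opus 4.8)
The plan is to unwind both sides to pointwise values and reduce the claim to exhibiting a single convenient factorization of $a$. Writing $c=\tfrac{1-k}{2}$ and recalling that $(P\wedge_{k}Q)(a)=P(a)\wedge Q(a)\wedge c$ and $(P\circ_{k}Q)(a)=\left(\bigvee_{a=pq}P(p)\wedge Q(q)\right)\wedge c$, the left-hand side at $a$ is simply $G(a)\wedge F(a)\wedge H(a)\wedge c$, while the right-hand side at $a$ equals $\bigvee_{a=(uv)w}\{G(u)\wedge F(v)\wedge H(w)\wedge c\}$. Thus it suffices to produce, for each $a\in M$, one factorization $a=(uv)w$ with $G(u)\ge G(a)\wedge c$, $F(v)\ge F(a)\wedge c$ and $H(w)\ge H(a)\wedge c$; combining these three bounds with $c$ then reproduces exactly the left-hand value.

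First I would upgrade the weak regularity hypothesis to the form $a\in(a^{2}M)a^{2}$. Using the medial law $(2)$ on $a=(ax)(ay)$ gives $a=a^{2}(xy)$ (right regularity), and via the paramedial law $(3)$ followed by $(2)$ one gets $a=(yx)a^{2}$ (left regularity); write $a=\ell a^{2}$ and $a=a^{2}m$. Substituting $aa=(a^{2}m)a$ into $a=\ell(aa)$ and then applying the left invertive law $(1)$ and $(4)$ yields $a=\ell((am)a^{2})=(am)(\ell a^{2})=(am)a$, so $M$ is also regular, hence completely regular. By the theorem characterising complete regularity proved above, $a\in(a^{2}M)a^{2}$, i.e. $a=(a^{2}t)a^{2}$ for some $t\in M$.

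The decisive step is a reassociation that places the three factors into the slots matching the ideal types of $G$, $F$ and $H$. Applying the medial law $(2)$ to $a=(a^{2}t)(aa)$ gives $a=(a^{2}a)(ta)$, which I read as $(uv)w$ with $u=a^{2}=aa$, $v=a$ and $w=ta$. Now $G$ is an $(\in,\in\vee q_{k})$-fuzzy right ideal, so $G(u)=G(aa)\ge G(a)\wedge c$; trivially $F(v)=F(a)\ge F(a)\wedge c$; and $H$ is an $(\in,\in\vee q_{k})$-fuzzy left ideal, so $H(w)=H(ta)\ge H(a)\wedge c$. Feeding the inner factorization $a^{2}a=(a^{2})a$ and the outer factorization $a=(a^{2}a)(ta)$ into the definition, the right-hand side at $a$ is at least $G(aa)\wedge F(a)\wedge H(ta)\wedge c\ge G(a)\wedge F(a)\wedge H(a)\wedge c$, which is precisely $((G\wedge_{k}F)\wedge_{k}H)(a)$. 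This establishes the desired inequality.

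I expect the main obstacle to be exactly the bookkeeping of this reassociation: the element $t$ coming from regularity must be maneuvered into the rightmost factor (where the left ideal $H$ absorbs it) while an $a^{2}$ is left in the leftmost factor (where the right ideal $G$ absorbs it) and a bare $a$ sits in the middle. The symmetric form $a=(a^{2}t)a^{2}$ makes this possible in a single medial step; the only genuinely non-routine preliminary is verifying that weak regularity forces $a\in(a^{2}M)a^{2}$, for which the short computation $a=\ell(aa)=(am)a$ above is the key. Note that the interior-ideal structure of $F$ is not actually needed for this one-sided inequality, since the middle factor can be taken to be $a$ itself.
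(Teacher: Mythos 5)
Your proposal is correct, but it takes a genuinely different route from the paper. The paper works directly from the weak regularity decomposition: it writes $a=(ya)(xa)$ via the paramedial law, feeds $H$ the factor $xa$, and then expands $ya=(ax)((ya)(ye))$ so that $G$ sees $ax$ and $F$ sees $(ya)(ye)$, an element of the form $(pa)q$ on which the interior-ideal property of $F$ is genuinely invoked. You instead upgrade weak regularity to complete regularity (your computation $a=\ell(aa)=\ell((am)a^{2})=(am)(\ell a^{2})=(am)a$ is valid and does require the left identity for $(4)$), invoke the earlier characterization $a\in(a^{2}M)a^{2}$, and use the single medial-law factorization $a=(a^{2}a)(ta)$ with inner factorization $a^{2}a=(a^{2})a$. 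Both arguments are sound. What your route buys is a shorter final estimate and the observation, which you correctly flag, that the interior-ideal hypothesis on $F$ is not needed for this one-sided inequality, since $F$ is only evaluated at $a$ itself; the cost is the extra preliminary that weak regularity forces complete regularity, whereas the paper never leaves the weakly regular setting and its proof is the one in which the hypothesis on $F$ actually does work. The only caveat worth recording is that your detour makes the theorem's assumption on $F$ look vacuous, which suggests the paper intended the statement to be read with the weaker, self-contained argument in mind.
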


\begin{proof}
Let $M$ be a weakly regular right modular groupoid with left identity, then
for each $a$ in $M$ there exists $x$ and $y$ in $M$ such that $a=(ax)(ay)$,
then by using $\left( 3\right) $, we get $a=(ya)(xa)$, and also by using $%
\left( 4\right) $ and $\left( 3\right) $, we get 
\begin{equation*}
ya=y((ax)(ay))=(ax)(y(ay))=(ax)((ey)(ay))=(ax)((ya)(ye))
\end{equation*}

Then%
\begin{eqnarray*}
((G\circ _{k}F)\circ _{k}H)(a) &=&\dbigvee_{a=pq}\left \{ (G\circ
_{k}F)(p)\wedge H(q)\right \} \geq (G\circ _{k}F)(ya)\wedge H(xa) \\
&\geq &(G\circ _{k}F)(ya)\wedge H(a)\wedge \frac{1-k}{2} \\
&=&\dbigvee_{ya=bc}\left \{ G(b)\wedge F(c)\right \} \wedge H(a)\wedge \frac{%
1-k}{2} \\
&\geq &(G(ax)\wedge F((ya)(ye)))\wedge H(a)\wedge \frac{1-k}{2} \\
&\geq &(G(a)\wedge \frac{1-k}{2}\wedge F(a)\wedge \frac{1-k}{2})\wedge
H(a)\wedge \frac{1-k}{2} \\
&=&(G(a)\wedge F(a))\wedge H(a)\wedge \frac{1-k}{2} \\
&=&((G\wedge F)\wedge H)(a)\wedge \frac{1-k}{2} \\
&=&((G\wedge _{k}F)\wedge _{k}H)(a)\text{.}
\end{eqnarray*}

Therefore, $(G\wedge _{k}F)$ $\wedge _{k}H)\leq ((G\circ _{k}F)\circ _{k}H)$.
\end{proof}

\begin{theorem}
For a weakly regular right modular groupoid $M$ with left identity, $%
F_{k}\leq ((F\circ _{k}1)\circ _{k}F)$, where $F$ is an $(\in ,\in \vee
q_{k})$- fuzzy interior ideal.
\end{theorem}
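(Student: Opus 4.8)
The plan is to prove the asserted inequality pointwise: fix $a\in M$ and show that $\big((F\circ _{k}1)\circ _{k}F\big)(a)\ge F(a)\wedge \frac{1-k}{2}=F_{k}(a)$. Since $M$ is weakly regular with left identity, there exist $x,y\in M$ with $a=(ax)(ay)$, and I will first convert this into a more convenient factorization. Applying the paramedial law $(3)$ to $a=(ax)(ay)$ gives $a=(ya)(xa)$, so I may estimate the left-hand side from below by using the factorization $a=(ya)\cdot (xa)$, namely $\big((F\circ _{k}1)\circ _{k}F\big)(a)\ge (F\circ _{k}1)(ya)\wedge F(xa)$.

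Next I would compute two auxiliary identities. For the first factor, substituting $a=(ax)(ay)$ into $ya$ and using $(4)$ twice yields $ya=(ax)(ay^{2})$, and then the medial law $(2)$ gives $ya=a^{2}(xy^{2})$. Hence, using the factorization $ya=a^{2}\cdot (xy^{2})$ together with the fact that $1$ is the constant fuzzy set, $(F\circ _{k}1)(ya)\ge F(a^{2})\wedge 1(xy^{2})\wedge \frac{1-k}{2}=F(a^{2})\wedge \frac{1-k}{2}$; and by the subgroupoid condition $(i)$ for an $(\in ,\in \vee q_{k})$-fuzzy interior ideal, $F(a^{2})=F(aa)\ge F(a)\wedge \frac{1-k}{2}$. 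For the second factor I would similarly substitute $a=(ax)(ay)$ into $xa$, use $(4)$ to obtain $xa=(ax)(a(xy))$, and then apply the paramedial law $(3)$ to rewrite this as $xa=\big((xy)a\big)(xa)$. This last expression has $a$ sitting in the interior (middle) position, so the interior-ideal condition $(ii)$ applies and gives $F(xa)=F\big(((xy)a)(xa)\big)\ge F(a)\wedge \frac{1-k}{2}$.

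Combining these bounds, $\big((F\circ _{k}1)\circ _{k}F\big)(a)\ge \big(F(a)\wedge \frac{1-k}{2}\big)\wedge \big(F(a)\wedge \frac{1-k}{2}\big)=F(a)\wedge \frac{1-k}{2}=F_{k}(a)$, which is the desired inequality since $a$ was arbitrary, giving $F_{k}\le (F\circ _{k}1)\circ _{k}F$.

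The step I expect to be the main obstacle is producing the identity $xa=\big((xy)a\big)(xa)$: the whole argument hinges on maneuvering $a$ into the central slot of a triple product so that the interior-ideal inequality $(ii)$ (rather than a mere left/right ideal inequality) can be invoked, and this requires the right sequence of the laws $(4)$ and $(3)$ starting from weak regularity. A secondary point worth being careful about is the role of the constant factor $1$ in $F\circ _{k}1$: because $1(t)=1$ for every $t$, the middle term of the double product is unconstrained, so only the two outer factors $a^{2}$ and $xa$ need to be controlled by $F$, which is exactly what the two auxiliary identities deliver.
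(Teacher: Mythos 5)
Your proof is correct, but it follows a genuinely different decomposition from the paper's. The paper applies the left invertive law $(1)$ to $a=(ax)(ay)$ to get $a=((ay)x)\cdot a$, so the right-hand factor of the outer product is $a$ itself and contributes $F(a)$ for free; the only work is then the single auxiliary identity $ay=(((yy)x)a)a$, to which the interior-ideal condition is applied inside $(F\circ 1)$ after the constant $1$ absorbs the factor $x$. You instead use the paramedial law $(3)$ to get the symmetric factorization $a=(ya)\cdot(xa)$, which forces you to control both factors: $ya=a^{2}(xy^{2})$ (handled by the subgroupoid condition on $a^{2}$, with $1$ absorbing $xy^{2}$) and $xa=((xy)a)(xa)$ (handled by the interior-ideal condition, with $a$ correctly maneuvered into the middle slot). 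I checked all four identities against the laws $(1)$--$(4)$ and they hold; in particular $x((ax)(ay))=(ax)(a(xy))=((xy)a)(xa)$ is valid under the left-identity hypothesis, which is exactly where $(3)$ and $(4)$ are licensed. The trade-off is that the paper's route is shorter (one auxiliary identity, only laws $(1)$ and $(4)$), while yours is more symmetric and exercises the medial and paramedial laws as well; both deliver the same bound $F(a)\wedge\frac{1-k}{2}$, and your final assembly of the infima is sound.
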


\begin{proof}
Let $M$ be a weakly regular right modular groupoid with left identity, then
for each $a\in M$ there exist $x,y\in M$ such that $a=(ax)(ay)$, then by
using $(1)$ $a=((ay)x)a$. Also by using $\left( 1\right) $ and $\left(
4\right) $, we have%
\begin{eqnarray*}
(ay) &=&(((ax)(ay))y)=((y(ay))(ax))=((a(yy))(ax)) \\
&=&(((ax)(yy))a)=((((yy)x)a)a)\text{.}
\end{eqnarray*}%
Then%
\begin{eqnarray*}
((F\circ _{k}1)\circ _{k}F)(a) &=&((F\circ 1)\circ F)(a)\wedge \frac{1-k}{2}
\\
&=&\dbigvee_{a=pq}\left \{ (F\circ 1)(p)\wedge F(q)\right \} \wedge \frac{1-k}{%
2} \\
&\geq &(F\circ 1)(((ay)x)\wedge F(a)\wedge \frac{1-k}{2} \\
&=&\dbigvee_{(ay)x=(bc)}\left \{ F(b)\wedge 1(c)\right \} \wedge F(a)\wedge 
\frac{1-k}{2} \\
&\geq &F(ay)\wedge 1(x)\wedge F(a)\wedge \frac{1-k}{2} \\
&=&F((((yy)x)a)a)\wedge F(a)\wedge \frac{1-k}{2} \\
&\geq &F(a)\wedge \frac{1-k}{2}\wedge F(a)\wedge \frac{1-k}{2} \\
&=&F(a)\wedge \frac{1-k}{2}=F_{k}(a)\text{.}
\end{eqnarray*}

Therefore, $F_{k}\leq ((F\circ _{k}1)\circ _{k}F)$.
\end{proof}

\end{document}